\newtheorem{theorem}{Theorem}[section]
\newtheorem{lemma}[theorem]{Lemma}
\newtheorem{corollary}[theorem]{Corollary}
\newtheorem{proposition}[theorem]{Proposition}
\theoremstyle{definition}
\newtheorem{definition}[theorem]{Definition}
\newtheorem{example}[theorem]{Example}
\theoremstyle{remark}
\newtheorem{remark}[theorem]{Remark}
\DeclareMathOperator{\Img}{Im} \DeclareMathOperator{\Hom}{Hom}
\DeclareMathOperator{\Ext}{Ext} \DeclareMathOperator{\End}{End}
\DeclareMathOperator{\tria}{tria} \DeclareMathOperator{\coh}{coh}
\DeclareMathOperator{\fdmod}{mod} \DeclareMathOperator{\Ind}{Ind}
\DeclareMathOperator{\Aut}{Aut}
\DeclareMathOperator{\Ker}{Ker}
\newcommand{\Dcal}{\ensuremath{\mathcal{D}}}
\newcommand{\Xcal}{\ensuremath{\mathcal{X}}}
\newcommand{\Ycal}{\ensuremath{\mathcal{Y}}}
\newcommand{\Tcal}{\ensuremath{\mathcal{T}}}
\newcommand{\Acal}{\ensuremath{\mathcal{A}}}
\newcommand{\Hcal}{\ensuremath{\mathcal{H}}}
\newcommand{\Rcal}{\ensuremath{\mathcal{R}}}
\newcommand{\D}{\mathbb{D}}
\newcommand{\K}{\mathbb{K}}
\newcommand{\Z}{\mathbb{Z}}
\newcommand{\N}{\mathbb{N}}
\newcommand{\ra}{\rightarrow}
\numberwithin{equation}{section}
\begin{document}
\title{{t}-structures via recollements for piecewise hereditary algebras}
\author{Qunhua Liu, Jorge Vit{\'o}ria}\thanks{The second named author is supported by DFG - SPP Darstellungstheorie 1388}
\address{Qunhua Liu, Jorge Vit{\'o}ria, Institute for algebra and number theory, University of Stuttgart, Pfaffenwaldring 57, D-70569 Stuttgart, Germany}
\email{qliu@mathematik.uni-stuttgart.de, vitoria@mathematik.uni-stuttgart.de}
\date{\today}

\begin{abstract}
Following the work of Beilinson, Bernstein and Deligne, we study
restriction and induction of t-structures in triangulated categories
with respect to recollements. For derived categories of piecewise
hereditary algebras we give a necessary and sufficient condition for
a bounded t-structure to be induced from a recollement by derived
categories of algebras. As a corollary we prove that for
hereditary algebras of finite representation type all bounded
t-structures can be obtained in this way. \medskip \\
{\bf MSC 2010 classification:} 16G30, 18E30.\\
{\bf Keywords:} t-structures; recollements; piecewise hereditary algebras.
\end{abstract}
\maketitle

\begin{section}{Introduction}
The concepts of recollement and t-structure go back to the work of
Beilinson, Bernstein and Deligne (\cite{BBD}).  In that paper, the
interplay between these two ideas is explored in order to build the
category of perverse sheaves. In fact, this category is seen as the
heart of some t-structure induced from a recollement of the derived
category of constructible sheaves on a stratified scheme. This
technique of building a t-structure out of a recollement is the
major topic of their paper.

One of the main results of \cite{BBD} states that the heart of a
t-structure is an abelian category. Of course every abelian category
can be seen as the heart of a t-structure in its derived category
(the standard t-structure) but the question of which abelian
categories can be found as hearts of t-structures in a fixed derived
category remains as an interesting research topic. In 1996, Happel,
Reiten and Smal{\o} (\cite{HRS}) studied the interactions between
t-structures and tilting theory, turning this concept into an
important notion in the representation theory of finite dimensional
algebras. Recently, interesting interactions between bounded
t-structures for finite dimensional algebras and certain sets of
\textit{simple-minded} objects has appeared in the work of Keller
and Nicol{\'a}s (\cite{KN}) and Koenig and Yang (\cite{KY}). In their
work, sets of simple-minded objects are used to construct
equivalences of derived categories sending these sets to sets of
simple objects.

The full set of bounded t-structures on a triangulated category is
known only for a very few examples. It is known for division rings
and semisimple algebras, although this seems to not yet be written. Thus, for the convenience of the reader, we include them
in section 4. A famous (nontrivial) example for which much is known about t-structures
is that of the Kroenecker quiver. In \cite{GKR} one can find a
complete list of bounded t-structures, up to autoequivalence, on the derived category of coherent
sheaves over the projective line, which is well known to be
equivalent to the derived category of finitely generated modules
over the Kroenecker quiver. In our last section (Example \ref{coh}) we observe that the
t-structure with heart coh$(\mathbb{P}^1)$ in this triangulated category cannot be induced from a recollement of derived categories.
%Curiously, the analogue of the BBD perverse t-structure in the
%coherent setting (developed by Arinkin and Bezrukavnikov in
%\cite{ArB} from ideas of Deligne) is such an example.
Moreover, the classification of bounded t-structures obtained in \cite{GKR} for
the Kroenecker quiver is a consequence of the study of the space of
stability conditions on the projective line. These ideas with origin
in the work of Bridgeland (\cite{Br}) are of significant geometric
interest and have been intensively studied for the past years.

On the other hand, recollements for derived categories and
connections to tilting theory had been explored in the work of
Parshall and Scott (\cite{PS}). More recently, Angeleri H{\"u}gel,
Koenig and the first named author (\cite{ALK1}) have approached tilting theory using the general theory of recollements for triangulated categories. In that paper the authors provide ways of
constructing tilting objects from recollements and constructing recollements from tilting modules. Our approach to
this paper is of a similar spirit but with t-structures in mind
instead of tilting objects. Also, in \cite{ALK2}, Angeleri
H{\"u}gel, Koenig and the first named author  proceed to prove a
Jordan H\"older theorem for derived categories of hereditary artin algebras, later generalised  to piecewise hereditary algebras (\cite{AKL3}).

In this paper we study the interaction between these two key
concepts: recollements and t-structures. We use the construction set by Beilinson, Bernstein and
Deligne (\cite{BBD}) to study properties of t-structures in the derived categories
of piecewise hereditary algebras, using for that purpose some of the
examples and techniques from \cite{ALK2, AKL3}, namely how to construct a recollement from an indecomposable and exceptional (i.e. without self-extensions) object. Piecewise hereditary algebras were defined in \cite{H} and \cite{HRS1}. An algebra is piecewise hereditary if it is derived equivalent to a certain hereditary abelian category. This covers, for example, hereditary algebras, (quasi-)tilted algebras and canonical algebras. Our main result states that for a piecewise hereditary algebra, the bounded t-structures whose heart is a length category can be obtained by a BBD-induction with respect to some recollement.

The paper is structured as follows. In section
2 we recall some preliminaries on recollements and t-structures that will be needed later. In section 3 we recall the definition of BBD-induction and restriction of t-structures with respect to a recollement and
investigate general properties of an induced t-structure, namely nondegeneracy, boundedness (see also \cite{CT,W}) and when it possesses a heart of finite length. Also, it is observed that, when restriction is possible, it is an inverse process to induction.  In section 4 we discuss t-structures for semisimple
algebras and in section 5 we prove that all bounded t-structures for
$A_n$ are induced with respect to a recollement associated with an
idempotent. In the case $n=2$ we can actually describe explicitly
all the possible t-structures. Finally, section 6 proves the main
result, using the techniques introduced in \cite{KN, KY}.
\end{section}

\begin{section}{Preliminaries}

In this section we recall the definition and some properties of recollements, (bounded)
t-structures and simple-minded objects in triangulated categories.

\begin{subsection}{Recollements} Let $\Xcal, \Ycal, \Dcal$ be triangulated categories. $\Dcal$ is said to be  a
{\em recollement} of $\Xcal$ and $\Ycal$ if there are six triangle
functors as in the following diagram
\begin{equation}\nonumber
\begin{xymatrix}{\mathcal{Y}\ar[r]^{i_*=i_!}&\mathcal{D}\ar@<3ex>[l]_{i^!}\ar@<-3ex>[l]_{i^*}\ar[r]^{j^*=j^!}&\mathcal{\mathcal{X}}\ar@<3ex>_{j_*}[l]\ar@<-3ex>_{j_!}[l]}.
\end{xymatrix}
\end{equation}
such that
\begin{enumerate}
\item $(i^\ast,i_\ast)$,\,$(i_!,i^!)$,\,$(j_!,j^!)$ ,\,$(j^\ast,j_\ast)$
are adjoint pairs;

\item $i_\ast,\,j_\ast,\,j_!$  are full embeddings;

\item  $i^!\circ j_\ast=0$ (and thus also $j^!\circ i_!=0$ and
$i^\ast\circ j_!=0$);

\item for each $Z\in \Dcal$ there are triangles \[i_! i^!Z\to
Z\to j_\ast j^\ast Z\to i_! i^!Z[1]\]
\[j_! j^! Z\to Z\to
i_\ast i^\ast Z\to j_!j^!Z[1].\]
\end{enumerate}

The following lemma follows easily from the definition of
recollement and will be needed later.
\begin{lemma} \label{newreco}
Let $\mathcal{D}$ be a triangulated category and $\Phi$ be an
autoequivalence of $\mathcal{D}$, with $\Psi$ its quasi-inverse. If
\begin{equation}\nonumber
\begin{xymatrix}{\mathcal{Y}\ar[r]^{i_*}&\mathcal{D}\ar@<3ex>[l]_{i^!}\ar@<-3ex>[l]_{i^*}\ar[r]^{j^*}&\mathcal{\mathcal{X}}\ar@<3ex>_{j_*}[l]\ar@<-3ex>_{j_!}[l]}
\end{xymatrix}
\end{equation}
is a recollement, then so is
\begin{equation}\nonumber
\begin{xymatrix}{\mathcal{Y}\ar[r]^{\Phi i_*}&\mathcal{D}\ar@<3ex>[l]_{i^!\Psi}\ar@<-3ex>[l]_{i^*\Psi}\ar[r]^{j^*\Psi}&\mathcal{\mathcal{X}}\ar@<3ex>_{\Phi j_*}[l]\ar@<-3ex>_{\Phi j_!}[l]}.
\end{xymatrix}
\end{equation}
\end{lemma}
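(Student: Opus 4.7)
The plan is to check the four defining axioms of a recollement for the new diagram one by one, using only that $\Phi$ is an autoequivalence with quasi-inverse $\Psi$, so that both $(\Psi,\Phi)$ and $(\Phi,\Psi)$ are adjoint pairs, both $\Phi\Psi$ and $\Psi\Phi$ are naturally isomorphic to the identity, and both $\Phi$ and $\Psi$ are fully faithful triangle functors preserving distinguished triangles.

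First I would verify the adjointness condition (1). Each of the four required adjunctions in the new diagram is obtained by pre- or post-composing a functor in the original recollement with $\Phi$ or $\Psi$, so it follows by stacking the ambient adjunction $(\Psi,\Phi)$ (or $(\Phi,\Psi)$) on top of the corresponding original adjunction. For example, to check $(i^{\ast}\Psi,\,\Phi i_{\ast})$ one chains
\[\Hom(i^{\ast}\Psi X,Y)\cong \Hom(\Psi X,i_{\ast}Y)\cong \Hom(X,\Phi i_{\ast}Y),\]
and the other three pairs are handled identically.

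Next, condition (2) is immediate: $\Phi i_{\ast}$, $\Phi j_{\ast}$ and $\Phi j_{!}$ are compositions of a fully faithful functor with an equivalence, hence fully faithful. For condition (3) one computes $(i^{!}\Psi)(\Phi j_{\ast})\cong i^{!}j_{\ast}=0$ using $\Psi\Phi\cong\mathrm{id}$, and the two parenthesised vanishings then follow from the recollement axioms once adjointness has been established, exactly as in the original setup.

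Finally, for axiom (4), given $Z\in\mathcal{D}$ I would apply the two defining triangles of the original recollement to $\Psi Z$ in place of $Z$, and then apply the triangle functor $\Phi$ throughout. Since $\Phi\Psi\cong\mathrm{id}$, the middle term $\Phi\Psi Z$ identifies with $Z$, yielding
\[(\Phi i_{\ast})(i^{!}\Psi)Z\to Z\to (\Phi j_{\ast})(j^{\ast}\Psi)Z\to (\Phi i_{\ast})(i^{!}\Psi)Z[1]\]
and analogously the triangle built from $\Phi j_{!}$, $\Phi i_{\ast}$ and the units/counits twisted by $\Phi,\Psi$. I do not anticipate any real obstacle: the argument is routine bookkeeping with adjunctions and with the fact that equivalences preserve adjoint pairs, full faithfulness, triangles and zero compositions. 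The only point worth emphasising is that one must consistently use the isomorphism $\Phi\Psi\cong\mathrm{id}$ to re-identify the middle vertex of the transported triangles with $Z$ itself.
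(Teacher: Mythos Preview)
Your proposal is correct and follows essentially the same approach as the paper's proof: both verify the four recollement axioms by using that $(\Psi,\Phi)$ and $(\Phi,\Psi)$ are adjoint pairs (the paper phrases this as ``$(\Psi,\Phi,\Psi)$ is an adjoint triple''), that $\Phi,\Psi$ are fully faithful, and that $\Psi\Phi\cong\mathrm{id}\cong\Phi\Psi$. Your treatment of axiom (4) is in fact more explicit than the paper's, which simply asserts that ``the same argument applies''.
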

\begin{proof}
$(\Psi, \Phi, \Psi)$ is an adjoint triple and hence axiom (1) of recollement follows. Also, since $\Phi$ and $\Psi$ are fully faithful, axiom (2) is automatically true as well. Now, by definition of quasi-inverse, $\Psi\Phi$ and $\Phi\Psi$ are naturally equivalent to the identity, implying that $i^!\Psi\Phi j_*$ is naturally equivalent to $i^!j_*=0$, thus axiom (3). %??????
The same argument applies to check axiom (4), hence finishing the proof.
\end{proof}
\end{subsection}

\begin{subsection}{t-structures} For reference, see for example
\cite{BBD,M}. A {\em t-structure} in a triangulated
category $\Dcal$ is a pair $(\Dcal^{\leq 0}, \Dcal^{\geq 0})$ of
strictly full subcateogies with the following properties: write
$\Dcal^{\leq n} = \Dcal^{\leq 0}[-n]$ and $\Dcal^{\geq n} =
\Dcal^{\geq 0}[-n]$ for $n\in \Z$,
\begin{enumerate}
\item $\Dcal^{\leq 0} \subset \Dcal^{\leq 1}$, $\Dcal^{\geq 1} \subset \Dcal^{\geq
0}$;

\item $\Hom_{\Dcal}(X,Y) = 0$ for all $X\in \Dcal^{\leq 0}$ and
$Y\in\Dcal^{\geq 1}$;

\item for each $Z$ in $\Dcal$ there is a distinguished triangle $X \ra Z \ra Y \ra
X[1]$ where $X\in\Dcal^{\leq 0}$ and $Y\in\Dcal^{\geq 1}$.
\end{enumerate}
By definition, $\Dcal^{\leq 0} = \{X \in \Dcal: \Hom_{\Dcal}(X,Y) =
0,\ \forall\ Y\in \Dcal^{\geq 1} \}$ and $\Dcal^{\geq 1} = \{Y \in
\Dcal: \Hom_{\Dcal}(X,Y) = 0,\ \forall\ X\in \Dcal^{\leq 0} \}$. The
subcategory $\Dcal^{\leq 0}$ is called an {\em aisle}, and
$\Dcal^{\geq 0}$ is called a {\em coaisle}. The {\em heart}
$\Dcal^{\leq 0} \cap \Dcal^{\geq 0}$ is always an abelian category.
This is a full subcategory of $\Dcal$. But in general its derived
category has nothing to do with the original triangulated category
$\Dcal$. Nevertheless the first extension group is preserved.

\begin{lemma}[\cite{M} Chapter 4, 2.1.1] Let $\Dcal$ be a triangulated
category with a t-structure $(\Dcal^{\leq 0},\Dcal^{\geq 0})$, and
$\Acal = \Dcal^{\leq 0} \cap \Dcal^{\geq 0}$ be the heart. For any
objects $X$, $Y$ in $\Acal$ it holds that
$\Ext^1_{\Acal}(X,Y)=\Hom_{\Dcal}(X,Y[1])$.
\label{extension-in-heart}
\end{lemma}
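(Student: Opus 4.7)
The plan is to construct mutually inverse isomorphisms of abelian groups between $\Ext^1_{\Acal}(X,Y)$ and $\Hom_{\Dcal}(X, Y[1])$, exploiting the fact that short exact sequences in $\Acal$ should correspond to distinguished triangles in $\Dcal$ all of whose vertices lie in the heart.

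First I would define the forward map. Given a representative $0 \to Y \to E \to X \to 0$ of a class in $\Ext^1_{\Acal}(X,Y)$, I would complete the morphism $Y \to E$ to a distinguished triangle $Y \to E \to C \to Y[1]$ in $\Dcal$. Since both $\Dcal^{\leq 0}$ and $\Dcal^{\geq 0}$ are closed under extensions (directly from axioms (2) and (3) of the t-structure, using the corresponding long exact $\Hom$-sequence), the object $C$ belongs to $\Acal$; a routine argument using that $E \to X$ is a cokernel of $Y \to E$ in the heart identifies $C$ with $X$, and the connecting morphism supplies the desired element of $\Hom_{\Dcal}(X, Y[1])$. For the backward direction, given $f : X \to Y[1]$, I would complete it to a triangle $Y \to E \to X \xrightarrow{f} Y[1]$. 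The extension-closure argument applied to this triangle shows $E \in \Dcal^{\leq 0}$, while applying it to the rotation $X[-1] \to Y \to E \to X$ (using $X[-1] \in \Dcal^{\geq 1} \subset \Dcal^{\geq 0}$) shows $E \in \Dcal^{\geq 0}$. Hence $E \in \Acal$, and the triangle represents a short exact sequence $0 \to Y \to E \to X \to 0$ in $\Acal$.

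Mutual inverseness follows from the uniqueness (up to non-canonical isomorphism) of the cone in a distinguished triangle, together with the standard fact that a morphism of triangles that is the identity on two vertices is an isomorphism on the third; one must also verify that equivalent extensions on the left-hand side produce the same connecting morphism, which is a direct consequence of the axioms relating morphisms of triangles. The main obstacle I anticipate is verifying compatibility with the additive structures: the Baer sum on $\Ext^1_{\Acal}(X,Y)$ must match addition of morphisms in $\Hom_{\Dcal}(X,Y[1])$. I would handle this by expressing the Baer sum via the usual pushout/pullback construction of short exact sequences, lifting this construction to the triangulated setting via the octahedral axiom, and matching it with the description of $f+g$ as the composite $X \xrightarrow{\Delta} X \oplus X \xrightarrow{f \oplus g} Y[1] \oplus Y[1] \xrightarrow{\nabla} Y[1]$. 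This last verification is the bulk of the technical work and is where I expect the most care to be needed.
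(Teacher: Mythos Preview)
The paper does not give its own proof of this lemma; it is stated with a citation to \cite{M} and used as a black box. So there is no in-paper argument to compare against, and your outline is essentially the standard proof one finds in the cited reference or in \cite{BBD}.

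That said, there is one imprecision worth flagging. In your forward map you write that, from the triangle $Y \to E \to C \to Y[1]$, closure of $\Dcal^{\leq 0}$ and $\Dcal^{\geq 0}$ under extensions immediately gives $C \in \Acal$. But in that triangle it is $E$, not $C$, that sits in the extension position; closure under extensions alone does not place the cone of a map between heart objects back in the heart (indeed this fails for arbitrary maps in $\Acal$). What you actually need is: from the rotation $E \to C \to Y[1]$ one gets $C \in \Dcal^{\leq 0}$ and $C \in \Dcal^{\geq -1}$, and then the long exact cohomology sequence together with the hypothesis that $Y \to E$ is a \emph{monomorphism} in $\Acal$ forces $H^{-1}(C)=0$, whence $C \in \Acal$. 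Once that is in place, the identification $C \cong X$ via the cokernel property goes through, and the uniqueness of the connecting map follows because $E \to X$ is an epimorphism in $\Acal$. The rest of your plan (backward map, Baer sum via diagonal/codiagonal and octahedron) is correct and standard.
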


For a given object $Z\in\Dcal$, the canonical triangle as in $(3)$
is unique (up to equivalence). The associated objects $X$ and $Y$
define the {\em truncation functors} $\tau_{\leq 0}: \Dcal \ra
\Dcal^{\leq 0}$ and $\tau_{\geq 1}: \Dcal \ra \Dcal^{\geq 1}$. For
$n\in\Z$, define $\tau_{\leq n}: \Dcal \ra \Dcal^{\leq n}$ to be
$[-n]\tau_{\leq 0}[n]$ and $\tau_{\geq n}:\Dcal \ra \Dcal^{\geq n}$
to be $[-n+1]\tau_{\geq 1}[n-1]$. The composition $\tau_{\geq
n}\tau_{\leq n}$ provides the {\em cohomological functor} $H^n$ from
$\Dcal$ to $\Dcal^{\leq n}\cap\Dcal^{\geq n} = (\Dcal^{\leq
0}\cap\Dcal^{\geq 0})[-n]$ and it can be shown to be equal to $H^n=[-n]H^0[n]$.

Let $(\Dcal^{\leq 0},\Dcal^{\geq 0})$ and $(\tilde{\Dcal}^{\leq 0},\tilde{\Dcal}^{\geq 0})$ be, respectively, t-structures in $\Dcal$ and $\tilde{\Dcal}$, triangulated categories. An exact functor $F: \Dcal \rightarrow \tilde{\Dcal}$ is said to be {\em right} (respectively {\em left}) {\em t-exact} with respect to these t-structures if $F(\Dcal^{\leq 0})\subset\tilde{\Dcal}^{\leq 0}$ (respectively, if $F(\Dcal^{\geq 0})\subset\tilde{\Dcal}^{\geq 0}$).  The functor $F$ is said to be {\em t-exact} if it is both left and right t-exact.
\end{subsection}

\begin{subsection}{Nondegeneracy and boundedness}
A t-structure $(\Dcal^{\leq 0}, \Dcal^{\geq 0})$ in a triangulated
category $\Dcal$ is said to be {\em nondegenerate} if
\begin{equation}\nonumber
\bigcap_{n\in\mathbb{Z}}\mathcal{D}^{\leq n} = 0\ \  {\rm and}
\bigcap_{n\in\mathbb{Z}}\mathcal{D}^{\geq n} = 0
\end{equation}
and it is {\em bounded} if
\begin{equation}\nonumber
\bigcup_{n\in\mathbb{Z}}\mathcal{D}^{\leq n} = \mathcal{D}\ \  {\rm
and} \bigcup_{n\in\mathbb{Z}}\mathcal{D}^{\geq n} = \mathcal{D}.
\end{equation}
By \cite[Chapter 4, 1.3.1 and 1.3.4]{M}, a t-structure is nondegenerate if and only if every
object in $\Dcal$ with zero cohomology in every degree is isomorphic to the zero
object, and it is bounded if and only if it is nondegenerate and every object in $\Dcal$ has bounded
cohomology.

If a t-structure  $(\Dcal^{\leq 0}, \Dcal^{\geq 0})$ is nondegenerate and the functors $H^n$, with $n\in\mathbb{Z}$, are the cohomological functors associated with it, it is easy to see that (check \cite{M} for details):
\[\Dcal^{\leq n} = \{X\in\Dcal: H^i(X) = 0, \ \forall\ i>n\}\]
\[\Dcal^{\geq n} = \{X\in\Dcal: H^i(X)=0, \ \forall\ i<n\}.\]
In the derived category $\Dcal(\Acal)$ of an abelian category $\Acal$ the standard t-structure is nondegenerate (and bounded if considered in $\Dcal^b(\Acal)$) and we denote it by $(\Dcal_0^{\leq 0}, \Dcal_0^{\geq 0})$.

For a class of objects $\mathcal{S}\subset \Dcal$ we denote by $\tria\mathcal{S}$ the smallest strictly full triangulated subcategory of $\Dcal$ containing $\mathcal{S}$.

\begin{lemma}\label{grothendieckgroup} Let $(\Dcal^{\leq 0}, \Dcal^{\geq 0})$  be
a t-structure in a triangulated category $\Dcal$, and $\Acal=\Dcal^{\leq 0}\cap\Dcal^{\geq 0}$ be the heart. If the t-structure is bounded, then $\tria \Acal = \Dcal$. In particular, the Grothendieck group of $\Acal$ coincides with that of $\Dcal$.
\end{lemma}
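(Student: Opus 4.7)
The plan is to prove the two parts in turn. For the first, I would show by induction on cohomological amplitude that every object $X\in\Dcal$ lies in $\tria\Acal$, using the t-structure truncation triangles. For the second, I would construct a natural map $K_0(\Acal)\to K_0(\Dcal)$ and an inverse built from the cohomology functors $H^n$.

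First I would fix $X\in\Dcal$. By boundedness there exist integers $a\le b$ with $X\in \Dcal^{\leq b}\cap\Dcal^{\geq a}$. Set $n(X):=b-a$ (the amplitude). I argue by induction on $n(X)$ that $X\in\tria\Acal$. If $n(X)=0$, then $X\in\Dcal^{\leq a}\cap\Dcal^{\geq a}=\Acal[-a]$, so $X\in\tria\Acal$. If $n(X)>0$, consider the canonical triangle
\[
\tau_{\leq a}X\longrightarrow X\longrightarrow \tau_{\geq a+1}X\longrightarrow (\tau_{\leq a}X)[1].
\]
Since $X\in\Dcal^{\geq a}$, the general identity $H^a = \tau_{\geq a}\tau_{\leq a}$ and $\tau_{\leq a}\tau_{\geq a}\simeq \tau_{\leq a}$ on $\Dcal^{\geq a}$ gives $\tau_{\leq a}X = H^a(X)[-a] \in\Acal[-a]\subseteq\tria\Acal$. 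On the other hand $\tau_{\geq a+1}X\in \Dcal^{\leq b}\cap\Dcal^{\geq a+1}$ has amplitude strictly smaller than $n(X)$, hence lies in $\tria\Acal$ by the induction hypothesis. As $\tria\Acal$ is a triangulated subcategory, the triangle above forces $X\in\tria\Acal$. Thus $\tria\Acal=\Dcal$.

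For the Grothendieck group statement, define $\varphi:K_0(\Acal)\to K_0(\Dcal)$ by $[A]\mapsto [A]$; this is well-defined since any short exact sequence $0\to A\to B\to C\to 0$ in $\Acal$ extends to a distinguished triangle $A\to B\to C\to A[1]$ in $\Dcal$ (by Lemma \ref{extension-in-heart} and the standard construction of the abelian structure on the heart). Surjectivity of $\varphi$ follows from the first part: in $K_0(\Dcal)$ the triangle $A\to 0\to A[1]\to A[1]$ yields $[A[n]]=(-1)^n[A]$, and by induction on amplitude, using the truncation triangle and the relation $[X]=[\tau_{\leq a}X]+[\tau_{\geq a+1}X]$, one sees that every class $[X]$ equals $\sum_n(-1)^n[H^n(X)]$, a finite sum of classes lying in $\mathrm{Im}\,\varphi$.

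To obtain injectivity, define $\psi:K_0(\Dcal)\to K_0(\Acal)$ by $[X]\mapsto\sum_{n\in\Z}(-1)^n[H^n(X)]$; the sum is finite by boundedness. The only thing to verify is that $\psi$ respects the triangulated relations, which follows from the long exact cohomology sequence associated to any distinguished triangle $X\to Y\to Z\to X[1]$ (a standard consequence of \cite[Chapter 4]{M}): the resulting exact sequence in $\Acal$ produces, after breaking it into short exact sequences, the identity $\sum_n(-1)^n[H^n(Y)] = \sum_n(-1)^n[H^n(X)] + \sum_n(-1)^n[H^n(Z)]$ in $K_0(\Acal)$. One then checks $\psi\varphi=\mathrm{id}$ (trivially, since $H^n(A)=0$ for $n\neq 0$ when $A\in\Acal$) and $\varphi\psi=\mathrm{id}$ (from the surjectivity computation above), concluding that $\varphi$ is an isomorphism.

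The main obstacle is the injectivity argument for $K_0$: one must verify that the alternating-sum map $\psi$ is well-defined on relations coming from triangles, which requires the long exact cohomology sequence in the heart—routine but the only place where careful bookkeeping is needed. The rest is a clean induction on amplitude using the truncation triangles of the t-structure.
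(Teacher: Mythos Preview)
Your proof is correct and takes essentially the same approach as the paper: both build every object of $\Dcal$ from shifts of heart objects via the truncation triangles, arguing (implicitly in the paper, explicitly in your case) by induction on cohomological amplitude. The paper's proof is a one-sentence sketch and leaves the Grothendieck group isomorphism entirely implicit, so your explicit construction of the inverse $\psi$ via the alternating cohomology sum is a welcome elaboration rather than a different route.
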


\begin{proof} Suppose $(\Dcal^{\leq 0}, \Dcal^{\geq 0})$ is a bounded t-structure in
$\Dcal$. So every object $X$ in $\Dcal$ has bounded cohomology (with
respect to the t-structure). Using the truncation functors and the
associated canonical triangles, we see that $X$ is generated by its
cohomology in finitely many steps. The statement follows.
\end{proof}

\end{subsection}

\begin{subsection}{Simple-minded objects}\label{simple-minded}
Let $A$ be a
finite dimensional $\mathbb{K}$-algebra. The bounded derived category of finite dimensional right $A$-modules will throughout be denoted by $\Dcal^b(A)$.

\begin{definition}
A set $X_1,...,X_n$ of objects in $\mathcal{D}^b(A)$ is a {\em
family of simple-minded objects} if the following conditions hold:
\begin{enumerate}
\item $\Hom(X_i, X_j[m])=0$, for all $m<0$;
\item $\Hom(X_i,X_j)=\delta_{ij}\mathbb{K}$, where $\delta_{ij}$ is the Kroenecker delta;
\item The set generates $\mathcal{D}^b(A)$, i.e. $\tria(X_1\oplus\ldots \oplus X_n) = \Dcal^b(A)$.
\end{enumerate}
Two families of simple-minded objects are called {\em equivalent} if
they have the same closure under extensions.
\end{definition}

Simple-minded objects play an important role when studying bounded t-structures of the derived category. Indeed the following holds.

\begin{theorem}[\cite{KY} Corollary 3.9; \cite{KN} Corollary 11.5]\label{KY-thm} There is a bijection between the set of bounded t-structures
in $\Dcal^b(A)$ whose heart is a length category (i.e., every object
has finite length) and the set of
equivalence classes of families of simple-minded objects of $\Dcal^b(A)$.
In particular, the heart of a bounded t-structure in $\Dcal^b(A)$ is
a length category if and only if it is equivalent to a module
category of some finite dimensional algebra. \label{bijection}
\end{theorem}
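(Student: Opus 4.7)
The plan is to establish the bijection in both directions and then check that the two constructions are mutually inverse, ending with a Morita-style identification of the heart.

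\textbf{From t-structure to simple-minded family.} Suppose $(\Dcal^{\leq 0},\Dcal^{\geq 0})$ is a bounded t-structure in $\Dcal^b(A)$ whose heart $\Acal$ is a length category, and let $X_1,\ldots,X_n$ be a complete set of representatives of the isoclasses of simple objects of $\Acal$. Finiteness of this set comes from Lemma \ref{grothendieckgroup}: the Grothendieck group of $\Acal$ equals $K_0(\Dcal^b(A))$, which is finitely generated, and in a length category the classes of simples form a basis of $K_0(\Acal)$. Axiom (1) is automatic, since $X_i\in\Dcal^{\leq 0}$ and $X_j[m]\in\Dcal^{\geq -m}\subset\Dcal^{\geq 1}$ for $m<0$. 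Axiom (2) is Schur's lemma inside the abelian category $\Acal$. For Axiom (3), Lemma \ref{grothendieckgroup} gives $\tria\Acal=\Dcal^b(A)$, and every object of the length category $\Acal$ admits a finite composition series with factors among the $X_i$, so $\Acal\subset\tria(X_1\oplus\cdots\oplus X_n)$.

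\textbf{From simple-minded family to t-structure.} Given a simple-minded family $X_1,\ldots,X_n$, I would define the candidate aisle and coaisle by the orthogonality conditions
\[\Dcal^{\leq 0}=\{Z:\Hom_{\Dcal^b(A)}(Z,X_i[m])=0 \text{ for all } i \text{ and all } m<0\},\]
\[\Dcal^{\geq 1}=\{Z:\Hom_{\Dcal^b(A)}(X_i[m],Z)=0 \text{ for all } i \text{ and all } m\leq 0\}.\]
The shift inclusions and the Hom-vanishing axiom are immediate from the definitions. The real work is the existence of truncation triangles for every $Z\in\Dcal^b(A)$. Following \cite{KN,KY}, one proceeds by induction on the cohomological amplitude of $Z$ with respect to the standard t-structure: since axiom (3) of simple-mindedness guarantees that $Z$ is built from finitely many shifts and extensions of the $X_i$, one can peel off one iterated extension at a time, separating the part that lands in $\Dcal^{\geq 1}$ from the part that lands in $\Dcal^{\leq 0}$ using the simple-minded orthogonality. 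The heart of the resulting t-structure is then identified with the extension closure of $\{X_1,\ldots,X_n\}$; by construction this is an abelian length category whose simples are precisely the $X_i$.

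\textbf{Mutual inverseness and the ``in particular'' statement.} Starting from a t-structure with length heart, passing to its simples and reapplying the above construction recovers the original aisle, because $Z\in\Dcal^{\leq 0}$ iff all its cohomologies vanish in positive degree, and by Jordan--H\"older in $\Acal$ this is detected by Hom-vanishing against the simples. Conversely, starting from a simple-minded family, taking the extension closure and then its simples returns the $X_i$ up to isomorphism, and the equivalence relation in the definition is exactly the ambiguity of choosing a different simple-minded family with the same closure. For the final statement: a $\K$-linear abelian length category with finitely many simples and finite-dimensional Hom-spaces is equivalent to $\fdmod B$ where $B$ is the opposite endomorphism algebra of a projective generator, and $B$ is finite dimensional because it is assembled from finitely many finite-dimensional Hom-spaces.

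\textbf{Main obstacle.} The principal difficulty is verifying the existence of truncation triangles in the second direction. Simple-mindedness is a cohomological/orthogonality condition, and converting it into a bona fide aisle requires a careful devissage argument that exploits the finite generation built into axiom (3) together with the boundedness of objects in $\Dcal^b(A)$ with respect to the standard t-structure. This devissage, which is where \cite{KY} and \cite{KN} concentrate their technical effort, is also what forces the extension closure of the $X_i$ to be an abelian length category rather than merely an additive subcategory.
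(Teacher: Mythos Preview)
The paper does not prove this theorem; it is quoted from \cite{KY} and \cite{KN}. What follows the statement in the paper is only a description of the bijection: from a bounded t-structure with length heart one takes the simple objects of the heart, and conversely from a simple-minded family one defines the aisle (respectively coaisle) as the extension closure of the non-negative (respectively non-positive) shifts of the $X_i$. Your forward direction matches this description exactly, and your use of Lemma~\ref{grothendieckgroup} to bound the number of simples is a nice touch that the paper leaves implicit.

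Two points on your reverse direction. First, a slip: in your formula for $\Dcal^{\geq 1}$ the condition should read $m\geq 0$, not $m\leq 0$ (as written, $X_i[-1]$ would fail your test because $\Hom(X_i[-1],X_i[-1])\neq 0$, yet it must lie in $\Dcal^{\geq 1}$). Second, and more substantively, the paper phrases the aisle and coaisle as extension closures rather than as perpendicular categories; the two descriptions coincide once one knows a t-structure is actually produced, but your orthogonality formulation shifts all the difficulty into the truncation-triangle step, which you correctly flag as the main obstacle and defer to \cite{KY,KN}.

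The genuine gap is in your ``in particular'' paragraph. You argue that a $\K$-linear length category with finitely many simples and finite-dimensional Hom-spaces is equivalent to $\fdmod B$ for $B$ the endomorphism ring of a projective generator. But a length category need not have enough projectives, so the existence of a projective generator is exactly the nontrivial content here. The paper's description signals how \cite{KY} actually handles this: the algebra $\Gamma$ is obtained as the zeroth cohomology of a dg algebra $\tilde{\Gamma}$ built from the $X_i$, and the identification of the heart with $\fdmod(\Gamma)$ goes through a derived equivalence with the derived category of $\tilde{\Gamma}$. Your direct Morita argument does not supply this, so as written the final sentence of your proof is circular.
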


Given a bounded t-structure in $\Dcal^b(A)$, if the heart is a
length category, it is then equivalent to mod$(\Gamma)$ for some
finite dimensional algebra $\Gamma$ (which is constructed as the
zero-th cohomology of the dg algebra $\tilde{\Gamma}$ in \cite{KY}).
The correspondence in the theorem can be made explicit by
associating to the bounded t-structure the set of simple
$\Gamma$-modules. By Lemma \ref{grothendieckgroup}, the rank of the
Grothendieck group of the heart, which is the number of the simple
$\Gamma$-modules, equals the rank of the Grothendieck group of
$\Dcal^b(A)$, and also of the algebra $A$. Conversely, given a
family of simple-minded objects $X_1,\ldots, X_n$, define the aisle
(coaisle) to be the extension closure of all non-negative
(non-positive) shifts of $X_i$'s. The extension closure of the $X_i$'s
is just the heart of the associated t-structure. In particular the
number $n$ equals the rank of the Grothendieck group of $A$.

We also have the following useful lemma.

\begin{lemma}\label{contains standard aisle}
Suppose $\Acal$ is a length abelian category with finitely many simple objects $X_1,...,X_n$. Let $T=(\mathcal{D}^{\leq
0},\mathcal{D}^{\geq 0})$ be a  bounded t-structure in $\mathcal{D}^b(\Acal)$.
Then $\mathcal{D}^{\leq 0}$ contains a shift of the standard
aisle.
\end{lemma}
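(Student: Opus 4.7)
The plan is to combine three ingredients: boundedness of $T$ (to locate each simple $X_i$ inside the new t-structure), the fact that $\Acal$ is a length category with simples $X_1,\ldots,X_n$ (to bootstrap from simples to all of $\Acal$), and the standard closure properties of the aisle $\Dcal^{\leq 0}$ under positive shifts and extensions.

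First I would use boundedness to produce a single integer $m$ such that every simple $X_i$ lies in $\Dcal^{\leq m}$. Indeed, because $\bigcup_n \Dcal^{\leq n}=\Dcal^b(\Acal)$, each $X_i$ sits in some $\Dcal^{\leq m_i}$, and we take $m=\max_i m_i$ (replacing it by $\max(m,0)$ if necessary so that $m\geq 0$). Equivalently, $X_i[m]\in\Dcal^{\leq 0}$ for all $i$. Since axiom (1) of a t-structure gives $\Dcal^{\leq 0}[1]=\Dcal^{\leq -1}\subset\Dcal^{\leq 0}$, the aisle is closed under positive shifts, so $X_i[j]\in\Dcal^{\leq 0}$ for every $i$ and every $j\geq m$.

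Next I would note that aisles are closed under extensions (a direct consequence of axiom (2) and the long exact $\Hom$ sequence). Combined with the length hypothesis, this gives: every $M\in\Acal$ admits a finite composition series with factors among the $X_i$, so $M[j]$ is a finite iterated extension of the objects $X_i[j]$ and hence $M[j]\in\Dcal^{\leq 0}$ whenever $j\geq m$. Finally, for an arbitrary $Y\in\Dcal_0^{\leq -m}$, the standard truncation triangles express $Y$ as a finite iterated extension of the (finitely many nonzero) objects $H_0^k(Y)[-k]$ with $k\leq -m$; each of these is of the form $M[j]$ with $M\in\Acal$ and $j\geq m$, hence lies in $\Dcal^{\leq 0}$, and therefore so does $Y$. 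This yields $\Dcal_0^{\leq 0}[m]=\Dcal_0^{\leq -m}\subset\Dcal^{\leq 0}$, which is exactly the required shift of the standard aisle.

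The argument is essentially routine and I do not expect a genuine obstacle; the only real bookkeeping is keeping the direction of shifts straight and ensuring that the constant $m$ may be chosen nonnegative. The conceptual content is simply that a bounded t-structure cannot spread the finitely many simples infinitely far apart, and the length hypothesis is what promotes this finite control on simples to control on every object of $\Dcal_0^{\leq -m}$.
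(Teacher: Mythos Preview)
Your proof is correct and follows essentially the same approach as the paper: use boundedness to place all the simples $X_i$ in a common $\Dcal^{\leq m}$ (the paper writes $\min$ where it should have $\max$, so you are right here), use extension closure of the aisle together with the length hypothesis to get $\Acal[m]\subset\Dcal^{\leq 0}$, and then invoke closure under positive shifts to conclude $\Dcal_0^{\leq -m}\subset\Dcal^{\leq 0}$. Your version is simply more explicit about the last step (decomposing via standard cohomology), and the normalization $m\geq 0$ is harmless but unnecessary.
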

\begin{proof}
Since the t-structure is bounded, for each $1\leq i\leq n$, there is an integer $k_i$ such that $X_i\in\Dcal^{\leq k_i}$. Define
\begin{equation}\nonumber
k:={\rm min}\left\{k_i: 1\leq i\leq n\right\}
\end{equation}
and then it is clear that $X_1,...,X_n\in\Dcal^{\leq k}$. Since $\Acal$ is the extension closure of $X_1,...,X_n$ and an aisle is closed under extensions, we have that $\Acal\subset \Dcal^{\leq k}$. By definition of t-structure, if $X\in\Dcal^{\leq n}$ for some $n\in\mathbb{Z}$ then $X[i]\in\Dcal^{\leq n}$ for all $i\geq 0$, and thus $\Dcal_0^{\leq 0}\subset \Dcal^{\leq k}$, or equivalently, $\Dcal_0^{\leq -k}\subset \Dcal^{\leq 0}$.
\end{proof}

\end{subsection}

\end{section}

\begin{section}{BBD-induction and BBD-Restriction}

Throughout, $\mathcal{D}$ will be a triangulated category. We denote the set of all t-structures in $\mathcal{D}$ by $\mathcal{T}_\mathcal{D}$. We shall assume, unless otherwise stated, that this triangulated category has a recollement, denoted by $\mathcal{R}$, by triangulated categories $\mathcal{\mathcal{X}}$ and $\mathcal{Y}$ as follows:
\begin{equation}\nonumber
\begin{xymatrix}{\mathcal{Y}\ar[r]^{i_*}&\mathcal{D}\ar@<3ex>[l]_{i^!}\ar@<-3ex>[l]_{i^*}\ar[r]^{j^*}&\mathcal{\mathcal{X}}\ar@<3ex>_{j_*}[l]\ar@<-3ex>_{j_!}[l]}.
\end{xymatrix}
\end{equation}

The theorem that motivates our approach is the following.

\begin{theorem}[\cite{BBD} Theorem 1.4.10]
Suppose $T_\mathcal{\mathcal{X}}=(\mathcal{\mathcal{X}}^{\leq 0},\mathcal{\mathcal{X}}^{\geq 0})$ and $T_\mathcal{Y}=(\mathcal{Y}^{\leq 0},\mathcal{Y}^{\geq 0})$ are t-structures in $\mathcal{\mathcal{X}}$ and $\mathcal{Y}$ respectively. Then $T=(\mathcal{D}^{\leq 0},\mathcal{D}^{\geq 0})$ defined by:
\begin{equation}\nonumber
\mathcal{D}^{\leq 0}:=\left\{Z\in \mathcal{D}: j^*Z\in \mathcal{\mathcal{X}}^{\leq0},\ i^*Z\in \mathcal{Y}^{\leq0}\right\}
\end{equation}
\begin{equation}\nonumber
\mathcal{D}^{\geq 0}:=\left\{Z\in \mathcal{D}: j^*Z\in \mathcal{\mathcal{X}}^{\geq0},\ i^!Z\in \mathcal{Y}^{\geq0}\right\}
\end{equation}
is a t-structure in $\mathcal{D}$.
\end{theorem}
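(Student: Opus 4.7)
The plan is to verify the three defining axioms of a t-structure for the pair $(\mathcal{D}^{\leq 0}, \mathcal{D}^{\geq 0})$ directly from the definition.

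Axiom (1), the inclusions $\mathcal{D}^{\leq 0} \subset \mathcal{D}^{\leq 1}$ and $\mathcal{D}^{\geq 1} \subset \mathcal{D}^{\geq 0}$, is immediate from the definitions and the corresponding property of $T_\mathcal{X}$ and $T_\mathcal{Y}$, since $j^*$, $i^*$ and $i^!$ are triangle functors and hence commute with $[1]$.

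For axiom (2), the Hom-vanishing, I would take $X \in \mathcal{D}^{\leq 0}$ and $Y \in \mathcal{D}^{\geq 1}$ and apply $\Hom_\mathcal{D}(X,-)$ to the recollement triangle $i_* i^! Y \to Y \to j_* j^* Y \to i_* i^! Y[1]$ (using $i_! = i_*$). By the adjunctions $(i^*, i_*)$ and $(j^*, j_*)$, the outer Hom-terms identify with $\Hom_\mathcal{Y}(i^* X, i^! Y)$ and $\Hom_\mathcal{X}(j^* X, j^* Y)$, both of which vanish by axiom (2) applied to $T_\mathcal{Y}$ and $T_\mathcal{X}$ respectively, since $i^* X \in \mathcal{Y}^{\leq 0}$, $i^! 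Y \in \mathcal{Y}^{\geq 1}$, $j^* X \in \mathcal{X}^{\leq 0}$ and $j^* Y \in \mathcal{X}^{\geq 1}$. The long exact sequence then forces $\Hom_\mathcal{D}(X,Y) = 0$.

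The main work, and the expected main obstacle, lies in axiom (3): constructing, for each $Z \in \mathcal{D}$, a triangle $X \to Z \to Y \to X[1]$ with $X \in \mathcal{D}^{\leq 0}$ and $Y \in \mathcal{D}^{\geq 1}$. My plan is to glue the truncations of $T_\mathcal{X}$ and $T_\mathcal{Y}$ via two applications of the octahedral axiom. First, truncate $j^* Z$ in $\mathcal{X}$ to get $A \to j^* Z \to B \to A[1]$ with $A \in \mathcal{X}^{\leq 0}$, $B \in \mathcal{X}^{\geq 1}$, apply $j_!$, and compose with the counit $j_! j^* Z \to Z$ to produce a morphism $j_! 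A \to Z$. The octahedron on the factorisation $j_! A \to j_! j^* Z \to Z$ yields an intermediate object $M$ sitting in triangles $j_! A \to Z \to M$ and $j_! B \to M \to i_* i^* Z$, and the vanishings $j^* i_* = 0$ together with $j^* j_! \cong \mathrm{id}$ give $j^* M \cong B \in \mathcal{X}^{\geq 1}$. A second octahedron, built on the truncation of $i^* Z$ in $\mathcal{Y}$, replaces $i_* i^* Z$ inside $M$ by $i_* \tau_{\geq 1}^{\mathcal{Y}}(i^* Z)$, producing the desired $Y \in \mathcal{D}^{\geq 1}$; its fibre along $Z \to Y$ is the desired $X \in \mathcal{D}^{\leq 0}$.

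The technical heart of this construction, and the point where I expect the main difficulty, is verifying, via the recollement vanishings $i^* j_! = 0$, $j^* i_* = 0$ and $i^! j_* = 0$, that the resulting $X$ and $Y$ satisfy all the prescribed conditions on $j^*$, $i^*$ and $i^!$; this amounts to a careful but fundamentally routine diagram chase through the two octahedra. Once such a triangle exists, its uniqueness up to isomorphism is forced automatically by axioms (1) and (2) already established, and standard arguments then promote the truncations to functors.
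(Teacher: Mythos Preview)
The paper does not supply its own proof of this result; it is quoted from \cite{BBD} without argument. Your treatment of axioms (1) and (2) is correct and is the standard one. The first octahedron in your handling of axiom (3) is also set up correctly: from $j_!A \to j_!j^*Z \to Z$ you obtain $M$ with triangles $j_!A \to Z \to M$ and $j_!B \to M \to i_*i^*Z$, and indeed $j^*M \cong B \in \mathcal{X}^{\geq 1}$ and $i^*M \cong i^*Z$.

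The second step, however, contains a real gap. You propose to truncate $i^*Z$ and ``replace $i_*i^*Z$ inside $M$ by $i_*\tau_{\geq 1}^{\mathcal{Y}}(i^*Z)$'', which would place $Y$ in a triangle of the shape $j_!B \to Y \to i_*D$ with $D = \tau_{\geq 1}^{\mathcal{Y}}(i^*Z)$. Applying $i^!$ to this triangle gives $i^!j_!B \to i^!Y \to D$, and $i^!j_!$ does \emph{not} vanish in a general recollement (only $i^*j_!$, $j^*i_*$ and $i^!j_*$ do). You therefore cannot conclude $i^!Y \in \mathcal{Y}^{\geq 1}$. You even list $i^!j_* = 0$ among the vanishings you intend to use, but your $Y$ is built from $j_!B$, not $j_*B$, so that vanishing is never actually invoked.

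The remedy, which is what \cite{BBD} do, is to run the second octahedron on the \emph{other} recollement triangle for $M$, namely $i_*i^!M \to M \to j_*j^*M \cong j_*B$, truncating $i^!M$ rather than $i^*Z$. Writing $C' \to i^!M \to D'$ for this truncation in $\mathcal{Y}$, the composite $i_*C' \to i_*i^!M \to M$ completes to $i_*C' \to M \to Y$, and the octahedron gives a triangle $i_*D' \to Y \to j_*B$; now $i^!j_* = 0$ forces $i^!Y \cong D' \in \mathcal{Y}^{\geq 1}$. A further octahedron on $Z \to M \to Y$ then produces $X$ sitting in $j_!A \to X \to i_*C'$, and $i^*j_! = 0$ gives $i^*X \cong C' \in \mathcal{Y}^{\leq 0}$. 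With this adjustment your outline becomes the BBD proof.
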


We will use the notation $T=:$ Ind$(T_\mathcal{\mathcal{X}},T_\mathcal{Y})$ and call $T$ the {\em BBD-induction} (or simply {\em induction}) of $T_\mathcal{\mathcal{X}}$ and $T_\mathcal{Y}$.

The problem of restricting t-structures is a more delicate issue and it is also discussed in \cite{BBD}. We include the proof for sake of completion.

\begin{proposition}[\cite{BBD} Proposition 1.4.12]\label{BBD compat}
Let $T=(\Dcal^{\leq 0},\Dcal^{\geq 0})$ be a t-structure in $\Dcal$. The following are equivalent:
\begin{enumerate}
\item $j_!j^*$ is right t-exact, i.e., $j_!j^*\Dcal^{\leq 0}\subset \Dcal^{\leq 0}$;
\item $j_*j^*$ is left t-exact, i.e., $j_*j^*\Dcal^{\geq 0}\subset \Dcal^{\geq 0}$;
\item $T$ is induced with respect to the recollement $\Rcal$.
\end{enumerate}
\end{proposition}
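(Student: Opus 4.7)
The plan is to prove the cycle $(3)\Rightarrow(1)\Leftrightarrow(2)\Rightarrow(3)$. For $(3)\Rightarrow(1)$ the argument is immediate: if $T=\Ind(T_\Xcal,T_\Ycal)$, the defining formulas force $j^*$ to be t-exact, so its left adjoint $j_!$ is right t-exact and hence $j_!j^*$ is right t-exact. Alternatively, for $Z\in\Dcal^{\leq 0}$ one checks directly that $j^*(j_!j^*Z)=j^*Z\in\Xcal^{\leq 0}$ and $i^*(j_!j^*Z)=0\in\Ycal^{\leq 0}$ using $i^*j_!=0$, so $j_!j^*Z\in\Dcal^{\leq 0}$. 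For $(1)\Leftrightarrow(2)$ I would invoke the adjunction chain
\[\Hom_\Dcal(j_!j^*A,C)=\Hom_\Xcal(j^*A,j^*C)=\Hom_\Dcal(A,j_*j^*C)\]
together with the orthogonality $\Dcal^{\leq 0}={}^\perp\Dcal^{\geq 1}$ and $\Dcal^{\geq 1}=(\Dcal^{\leq 0})^\perp$: the vanishing of the displayed Hom for every $A\in\Dcal^{\leq 0}$ and $C\in\Dcal^{\geq 1}$ is equivalent to $(1)$ via the left end and to $(2)$ via the right end.

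For the remaining implication $(1)\wedge(2)\Rightarrow(3)$, I would propose t-structures on $\Xcal$ and $\Ycal$ defined by
\[\Xcal^{\leq 0}:=j^*\Dcal^{\leq 0},\quad \Xcal^{\geq 0}:=j^*\Dcal^{\geq 0},\quad \Ycal^{\leq 0}:=i^*\Dcal^{\leq 0},\quad \Ycal^{\geq 0}:=i^!\Dcal^{\geq 0}\]
(as strictly full essential images) and verify the three axioms for each. Shift-stability is automatic, and Hom-vanishing for $T_\Xcal$ is the adjunction above combined with $(1)$. For $T_\Ycal$, Hom-vanishing uses $\Hom_\Ycal(i^*A,i^!B)=\Hom_\Dcal(A,i_*i^!B)$ together with the second BBD triangle $i_*i^!B\to B\to j_*j^*B$: by $(2)$ and closure of the coaisle under extensions, $i_*i^!B\in\Dcal^{\geq 1}$ whenever $B\in\Dcal^{\geq 1}$. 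The truncation triangles for $T_\Xcal$ come from applying $j^*$ to the $T$-truncation of $j_*X$, exploiting $j^*j_*=\text{id}$.

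The delicate step will be producing truncation triangles for $T_\Ycal$. Given $Y\in\Ycal$, I would truncate $i_*Y$ in $\Dcal$ to obtain $\tau_{\leq 0}i_*Y\to i_*Y\to\tau_{\geq 1}i_*Y$; since by construction $j^*$ is t-exact with respect to $T$ and $T_\Xcal$, it commutes with truncation, giving $j^*\tau_{\leq 0}i_*Y=\tau^\Xcal_{\leq 0}j^*i_*Y=0$ (using $j^*i_*=0$) and likewise $j^*\tau_{\geq 1}i_*Y=0$. The first BBD triangle $j_!j^*Z\to Z\to i_*i^*Z$ then forces both truncations to lie in the essential image of $i_*$; since $i^*$ and $i^!$ agree on $i_*\Ycal$ (both being quasi-inverse to $i_*$ there), applying $i^*$ produces a triangle in $\Ycal$ whose terms coincide with their $i^!$-counterparts and therefore lie in $\Ycal^{\leq 0}$ and $\Ycal^{\geq 1}$ respectively.

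Finally, to identify $T$ with $\tilde T:=\Ind(T_\Xcal,T_\Ycal)=(\tilde\Dcal^{\leq 0},\tilde\Dcal^{\geq 0})$, the inclusions $\Dcal^{\leq 0}\subset\tilde\Dcal^{\leq 0}$ and $\Dcal^{\geq 0}\subset\tilde\Dcal^{\geq 0}$ are tautological from the definitions of $T_\Xcal$ and $T_\Ycal$; since orthogonality reverses inclusions we then have $\tilde\Dcal^{\leq 0}={}^\perp\tilde\Dcal^{\geq 1}\subset{}^\perp\Dcal^{\geq 1}=\Dcal^{\leq 0}$, so the aisles agree and $T=\tilde T$. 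The principal obstacle I anticipate is the truncation axiom for $T_\Ycal$, which only works once $T_\Xcal$ is known to be a t-structure and hence $j^*$ is genuinely t-exact, allowing truncations in $\Dcal$ to be descended along $i_*$.
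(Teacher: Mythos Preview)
Your proof is correct and follows essentially the same route as the paper: the same cycle of implications, the same candidate t-structures $T_\Xcal=(j^*\Dcal^{\leq 0},j^*\Dcal^{\geq 0})$ and $T_\Ycal=(i^*\Dcal^{\leq 0},i^!\Dcal^{\geq 0})$, and the same key step of descending the $\Dcal$-truncation of $i_*Y$ along $i_*$ via the t-exactness of $j^*$. The only notable variation is in the final identification $T=\tilde T$: the paper shows $\tilde\Dcal^{\leq 0}\subset\Dcal^{\leq 0}$ directly using the canonical triangle $j_!j^!Z\to Z\to i_*i^*Z$ and right t-exactness of $j_!$ and $i_*$, whereas you obtain it from the symmetric inclusion $\Dcal^{\geq 0}\subset\tilde\Dcal^{\geq 0}$ by passing to left orthogonals---a slightly slicker finish.
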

\begin{proof}
The fact that the first two statements are equivalent since $(j_!j^*, j_*j^*)$ is an adjoint pair.  Note that these two conditions imply that $i_*i^*$ is right t-exact and $i_*i^!$ is left t-exact by the canonical triangles of the recollement. This implies, in particular, that Hom$(i^*\Dcal^{\leq 0},i^!\Dcal^{\geq 1})=0$.

It is clear that (3) implies (1) (and thus (2)). Indeed, if  $(\Dcal^{\leq 0},\Dcal^{\geq 0})$ is induced with respect to the recollement, then by definition $j^*\Dcal^{\leq 0}\subset \Xcal^{\leq 0}$. Also, $j_!\Xcal^{\leq 0}\subset \Dcal^{\leq 0}$ since, by adjunction, for $X\in\Xcal^{\leq 0}$ and $Y\in\Dcal^{\geq 1}$ (which implies that $j^*Y\in\Xcal^{\geq 1}$),
\begin{equation}\nonumber
{\rm Hom}_{\Dcal}(j_!X,Y)={\rm Hom}_{\Xcal}(X,j^*Y)= 0.
\end{equation}

Conversely, suppose that (1) (and (2)) hold. Then it is easy to check that $(j^*\Dcal^{\leq0},j^*\Dcal^{\geq 0})$ is a t-structure in $\Xcal$ (and thus $j^*$ is t-exact). Observe that  $i_*^{-1}(i_*\Ycal \cap \Dcal^{\leq 0})=i^*\Dcal^{\leq 0}$ and $i_*^{-1}(i_*\Ycal \cap \Dcal^{\geq 0})=i^!\Dcal^{\geq 0}$, since $i_*i^*$ is right t-exact and $i_*i^!$ is left t-exact. We check now that the pair $(i_*^{-1}(i_*\Ycal \cap \Dcal^{\leq 0}),i_*^{-1}(i_*\Ycal \cap \Dcal^{\geq 0}))$ is a t-structure in $\Ycal$. Since $i_*$ is a full embedding, it is enough to check that $(i_*\Ycal \cap \Dcal^{\leq 0},i_*\Ycal \cap \Dcal^{\geq 0})$ is a t-structure in $i_*\Ycal$. The orthogonality condition follows from the observations above. We only need to check the triangle condition of a t-structure. Let $Y\in\Ycal$. Since $j^*$ is t-exact then $j^*\tau^{\leq0}i_*Y=\tau_\Xcal^{\leq 0}j^*i_*Y=0$ and similarly, $j^*\tau^{\geq 1}i_*Y=0$. Thus the truncations $\tau^{\leq0}i_*Y$ and $\tau^{\geq 1}i_*Y$ lie in the image of $i_*$ and thus we get a triangle
\begin{equation}\nonumber
\tau^{\leq 0}i_*Y\rightarrow i_*Y\rightarrow \tau^{\geq 1}i_*Y\rightarrow \tau^{\leq 0}i_*X[1]
\end{equation}
in which $\tau^{\leq 0}i_*Y\in i_*\Ycal \cap \Dcal^{\leq 0}$ and $\tau^{\geq 1}i_*Y\in i_*\Ycal \cap \Dcal^{\geq 1}$.

It remains to check that $(\Dcal^{\leq 0},\Dcal^{\geq 0})$ is BBD-induced with respect to $\Rcal$. We prove that indeed it is induced with respect to $\Rcal$ by taking $T_\Xcal:=(j^*\Dcal^{\leq0},j^*\Dcal^{\geq 0})$ in $\Xcal$ and $T_\Ycal:=(i^*\Dcal^{\leq 0}, i^!\Dcal^{\geq 0})$ in $\Ycal$. Let $\tilde{T}={\rm Ind}(T_\Xcal,T_\Ycal)$. Clearly $\tilde{T}$ is determined by the aisle
\begin{equation}\nonumber
\tilde{\mathcal{D}}^{\leq 0}:=\left\{Z\in \mathcal{D}: j^*Z\in j^*\mathcal{D}^{\leq0},\ i^*Z\in i^*\mathcal{D}^{\leq0}\right\}
\end{equation}\nonumber
and it is clear that $\mathcal{D}^{\leq 0}\subset\tilde{\mathcal{D}}^{\leq 0}$. We prove that $\tilde{\mathcal{D}}^{\leq 0}=\mathcal{D}^{\leq 0}$, thus proving that $\tilde{T}=(\Dcal^{\leq 0},\Dcal^{\geq 0})$. If $Z\in \tilde{\mathcal{D}}^{\leq 0}$, $j_!j^*Z\in \mathcal{D}^{\leq 0}$ and $i_*i^*Z\in\mathcal{D}^{\leq 0}$ since $j_!$ and $i_*$ are  right t-exact with respect to $(\Dcal^{\leq 0},\Dcal^{\geq 0})$, $(j^*\Dcal^{\leq 0},j^*\Dcal^{\geq 0})$ and $(i^*\Dcal^{\leq 0},i^!\Dcal^{\geq 0})$. Thus, the triangle
\begin{equation}\nonumber
j_!j^!Z\longrightarrow Z \longrightarrow i_*i^*Z \longrightarrow
j_!j^!Z[1],
\end{equation}
given by the recollement, proves that $Z\in \mathcal{D}^{\leq 0}$.
\end{proof}

\begin{definition}
Fix a recollement $\mathcal{R}$ for $\mathcal{D}$ as above. A t-structure $T=(\mathcal{D}^{\leq 0},\mathcal{D}^{\geq 0})$ in $\mathcal{D}$ is said to be {\em compatible with the recollement} if $j_!j^*$ is right t-exact. We denote by $\Tcal^{\Rcal}_{\Dcal}$ the set of t-structure in $\mathcal{D}$ which are compatible with the recollement $\mathcal{R}$. For $T=(\Dcal^{\leq 0},\Dcal^{\geq 0})\in\Tcal^{\Rcal}_{\Dcal}$ we define the {\em BBD-restriction} (or simply \textit{restriction}) of $T$ with respect to $\Rcal$ to be the pair Res$(T):=(T_\Xcal,T_\Ycal)\in  \mathcal{T}_\mathcal{\mathcal{X}}\times \mathcal{T}_\mathcal{Y}$ where $T_\Xcal:=(j^*\Dcal^{\leq0},j^*\Dcal^{\geq 0})$ and $T_\Ycal:=(i^*\Dcal^{\leq 0}, i^*\Dcal^{\geq 0})$.
\end{definition}

It follows from the above proof that the aisle and the coisle of $T_{\Ycal}$ are respectively given by
\begin{equation}\nonumber
\begin{array}{c}i^*\Dcal^{\leq 0}=i_*^{-1}(i_*\Ycal\cap\Dcal^{\leq 0}) = \{Y\in\Ycal:i_*Y\in\Dcal^{\leq 0}\}, \\ i^!\Dcal^{\geq 0} = i_*^{-1}(i_*\Ycal\cap\Dcal^{\geq 0}) = \{Y\in\Ycal:i_*Y\in\Dcal^{\geq 0}\}.
\end{array}
\end{equation}
Also if $T_\Ycal$, $T$ and $T_\Xcal$ are compatible t-structures with respect to the recollement $\Rcal$, then the functors $i^*$ and $j_!$ are right t-exact, $i_*$ and $j^*$ are t-exact, and $i^!$ and $j_*$ are left t-exact.

The proposition above provides us with the following two maps
\begin{equation}\nonumber
{\rm Res}: \Tcal^{\Rcal}_{\Dcal}\longrightarrow
\mathcal{T}_\mathcal{\mathcal{X}} \times \mathcal{T}_\mathcal{Y}
\end{equation}
\begin{equation}\nonumber
{\rm Ind}:  \mathcal{T}_\mathcal{\mathcal{X}} \times \mathcal{T_\mathcal{Y}}\longrightarrow \mathcal{T}_\mathcal{D}
\end{equation}
and shows that for a t-structure $T\in\Tcal^{\Rcal}_{\Dcal}$, Ind$($Res$(T))=T$. Indeed we have more.

\begin{corollary}\label{Ind-Res}
Let $\mathcal{R}$ be a recollement as before and let $T_\mathcal{X}=(\Xcal^{\leq 0},\Xcal^{\geq 0})$ and $T_\mathcal{Y}=(\Ycal^{\leq 0},\Ycal^{\geq 0})$ be t-structures in $\Xcal$ and $\Ycal$ respectively. We have that ${\rm Ind}(T_\mathcal{X},T_\mathcal{Y})$ is compatible with $\Rcal$ and ${\rm Res}({\rm Ind}(T_\mathcal{X},T_\mathcal{Y}))=(T_\mathcal{X},T_\mathcal{Y})$. Consequently, ${\rm Res}$ and ${\rm Ind}$ are inverse bijections between $\Tcal^{\Rcal}_{\Dcal}$ and $\mathcal{T}_\mathcal{X}\times\mathcal{T}_\mathcal{Y}$.
\end{corollary}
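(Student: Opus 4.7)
The plan is to verify first that $T:=\mathrm{Ind}(T_{\Xcal},T_{\Ycal})$ is compatible with $\Rcal$ by applying Proposition \ref{BBD compat}, and then to compute $\mathrm{Res}(T)$ directly from the definitions using the standard adjunction identities $j^*j_!\cong\mathrm{id}$, $j^*j_*\cong\mathrm{id}$, $i^*i_*\cong\mathrm{id}$, $i^!i_!\cong\mathrm{id}$, together with the vanishings $i^*j_!=0$ and $i^!j_*=0$ coming from axiom (3) of the recollement.

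For compatibility, I would check that $j_!j^*$ is right t-exact with respect to $T$. Take $Z\in\Dcal^{\leq 0}$, i.e. $j^*Z\in\Xcal^{\leq 0}$ and $i^*Z\in\Ycal^{\leq 0}$. Then $j^*(j_!j^*Z)\cong j^*Z\in\Xcal^{\leq 0}$ and $i^*(j_!j^*Z)=0\in\Ycal^{\leq 0}$, so $j_!j^*Z\in\Dcal^{\leq 0}$ by definition of the induced t-structure. Hence $T\in\Tcal_{\Dcal}^{\Rcal}$.

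Next I would verify $\mathrm{Res}(T)=(T_\Xcal,T_\Ycal)$, which amounts to four inclusions that in each case are one easy and one by exhibiting a preimage. For $T_\Xcal$: the inclusion $j^*\Dcal^{\leq 0}\subset\Xcal^{\leq 0}$ is immediate from the definition of $\Dcal^{\leq 0}$; conversely, given $X\in\Xcal^{\leq 0}$, the object $j_!X$ lies in $\Dcal^{\leq 0}$ since $j^*j_!X\cong X\in\Xcal^{\leq 0}$ and $i^*j_!X=0\in\Ycal^{\leq 0}$, and then $X\cong j^*j_!X\in j^*\Dcal^{\leq 0}$. The analogous argument with $j_*X$ (using $i^!j_*=0$ and $j^*j_*\cong\mathrm{id}$) proves $j^*\Dcal^{\geq 0}=\Xcal^{\geq 0}$. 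For $T_\Ycal$: for $Y\in\Ycal^{\leq 0}$ the object $i_*Y$ lies in $\Dcal^{\leq 0}$ (since $j^*i_*Y=0$ and $i^*i_*Y\cong Y$), giving $Y\in i^*\Dcal^{\leq 0}$; symmetrically, for $Y\in\Ycal^{\geq 0}$ we have $i_*Y\in\Dcal^{\geq 0}$ and $Y\cong i^!i_*Y\in i^!\Dcal^{\geq 0}$.

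Combining this with the identity $\mathrm{Ind}(\mathrm{Res}(T'))=T'$ for every $T'\in\Tcal_{\Dcal}^{\Rcal}$, which is already established in Proposition \ref{BBD compat}, yields that $\mathrm{Ind}$ and $\mathrm{Res}$ are mutually inverse bijections between $\Tcal_{\Xcal}\times\Tcal_{\Ycal}$ and $\Tcal_{\Dcal}^{\Rcal}$. I do not anticipate a substantive obstacle: the argument is essentially a bookkeeping exercise with the recollement axioms, and the only mild subtlety is remembering that the coaisle of $T_\Ycal$ is described via $i^!$ rather than $i^*$, so one must use the adjoint triple $(i^*,i_*,i^!)$ in the appropriate slot when producing preimages on the $\Ycal$-side.
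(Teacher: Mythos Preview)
Your proposal is correct and follows essentially the same route as the paper: both verify compatibility via Proposition~\ref{BBD compat} and then compute $\mathrm{Res}(\mathrm{Ind}(T_\Xcal,T_\Ycal))$ directly using the adjunction identities of the recollement. The only cosmetic difference is that, for the $\Xcal$-side, the paper observes $j^*\Dcal^{\leq 0}\subset\Xcal^{\leq 0}$ and $j^*\Dcal^{\geq 1}\subset\Xcal^{\geq 1}$ and deduces equality from the fact that both are t-structures, whereas you exhibit explicit preimages $j_!X$ and $j_*X$; both arguments are equally short.
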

\begin{proof}
Let $T:={\rm Ind}(T_\mathcal{X},T_\mathcal{Y})=(\mathcal{D}^{\leq 0},\mathcal{D}^{\geq 0})$. It is compatible with $\Rcal$ by Proposition \ref{BBD compat}. Let Res$(T)=((\tilde{\Xcal}^{\leq0},\tilde{\Xcal}^{\geq0}),(\tilde{\Ycal}^{\leq0},\tilde{\Ycal}^{\geq0}))$. Clearly,
\begin{equation}\nonumber
\tilde{\mathcal{X}}^{\leq 0}:=j^*(\mathcal{D}^{\leq 0}) \subset \mathcal{X}^{\leq 0}\ \ \ {\rm and} \ \ \ \tilde{\mathcal{X}}^{\geq 1} :=j^*\mathcal{D}^{\geq 1}  \subset \mathcal{X}^{\geq 1},
\end{equation}
by definition of induced t-structure, and thus $\mathcal{X}^{\leq 0}=\tilde{\mathcal{X}}^{\leq 0}$. On the other hand, let $Y\in \tilde{\mathcal{Y}}^{\leq 0}=\left\{Y\in \mathcal{Y}:
i_*Y\in \mathcal{D}^{\leq 0}\right\}$. Then $Y\cong i^*i_*Y \in
\mathcal{Y}^{\leq 0}$ by definition of $\mathcal{D}^{\leq 0}$. Also, if $Y\in
\mathcal{Y}^{\leq 0}$, then so is $i^*i_*Y$ (because it is
isomorphic to $Y$). Therefore $i_*Y\in \mathcal{D}^{\leq 0}$, which
means that $Y\in \tilde{\mathcal{Y}}^{\leq 0}$.
\end{proof}

This means in particular that once we have a description of all t-structures of the middle term of a recollement, then we have a description of all t-structures of the left and the right hand sides of the recollement.

We now discuss some properties of t-structures that are preserved via induction and restriction. Nondegeneracy is discussed in \cite{BBD} and boundedness is partly discussed in \cite{CT, W}.

\begin{lemma}\label{Ind-res bdd}
The bijections ${\rm Res}$ and ${\rm Ind}$ restrict to bijections between nondegenerate (respectively, bounded) t-structures in $\Tcal^{\Rcal}_{\Dcal}$ and $\mathcal{T}_\mathcal{X}\times\mathcal{T}_\mathcal{Y}$.
\end{lemma}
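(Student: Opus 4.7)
The plan is to verify the four required implications: that $\mathrm{Ind}$ sends a pair of nondegenerate (respectively, bounded) t-structures to a t-structure of the same type, and dually that $\mathrm{Res}$ sends a nondegenerate (respectively, bounded) t-structure to a pair with the same property. All the bookkeeping will rest on the t-exactness data recorded just after the definition of BBD-restriction: $i_*$ and $j^*$ are t-exact, $j_!$ and $i^*$ are right t-exact, $j_*$ and $i^!$ are left t-exact. Combined with the standard identities $j^*j_! \simeq j^*j_* \simeq \mathrm{id}_{\mathcal{X}}$ and $i^*i_* \simeq i^!i_* \simeq \mathrm{id}_{\mathcal{Y}}$, together with the two canonical triangles of the recollement, these will do essentially all the work.

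For the forward direction, I would set $T = \mathrm{Ind}(T_{\mathcal{X}}, T_{\mathcal{Y}})$. For nondegeneracy, if $Z \in \bigcap_n \mathcal{D}^{\leq n}$, then the very definition of the induced aisle gives $j^*Z \in \bigcap_n \mathcal{X}^{\leq n} = 0$ and $i^*Z \in \bigcap_n \mathcal{Y}^{\leq n} = 0$, so the canonical triangle $j_!j^*Z \to Z \to i_*i^*Z \to j_!j^*Z[1]$ forces $Z = 0$; the coaisle intersection is handled symmetrically using the other canonical triangle and the $i^!$-description of $\mathcal{D}^{\geq 0}$. For boundedness, any $Z \in \mathcal{D}$ satisfies $j^*Z \in \mathcal{X}^{\leq n_1}$ and $i^*Z \in \mathcal{Y}^{\leq n_2}$ for some integers $n_1, n_2$, hence $Z \in \mathcal{D}^{\leq \max(n_1, n_2)}$, and the coaisle side is analogous with $i^!Z$.

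For the backward direction, assume $T$ has the property and put $(T_{\mathcal{X}}, T_{\mathcal{Y}}) = \mathrm{Res}(T)$. To transfer the property to $T_{\mathcal{X}}$, I would push objects of $\mathcal{X}$ into $\mathcal{D}$ via $j_!$ or $j_*$ and recover them with $j^*$: if $X \in \bigcap_n \mathcal{X}^{\leq n}$, right t-exactness of $j_!$ yields $j_!X \in \bigcap_n \mathcal{D}^{\leq n} = 0$, whence $X = j^*j_!X = 0$; for boundedness, pick $n$ with $j_!X \in \mathcal{D}^{\leq n}$ to obtain $X = j^*j_!X \in j^*\mathcal{D}^{\leq n} = \mathcal{X}^{\leq n}$. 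The coaisle analog for $T_{\mathcal{X}}$ is identical using $j_*$ and left t-exactness. To transfer the property to $T_{\mathcal{Y}}$, I would embed via the t-exact fully faithful $i_*$ and use the explicit descriptions $\mathcal{Y}^{\leq 0} = i_*^{-1}(i_*\mathcal{Y} \cap \mathcal{D}^{\leq 0})$ and $\mathcal{Y}^{\geq 0} = i_*^{-1}(i_*\mathcal{Y} \cap \mathcal{D}^{\geq 0})$ recorded after the definition of BBD-restriction; both statements then follow by the same pattern.

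I do not anticipate a substantive obstacle: each step is a one-line verification. The only care needed is to match every description of the aisle or coaisle with the correct t-exactness property, so that the compositions $j^*j_!$, $j^*j_*$, $i^*i_*$ and $i^!i_*$ are invoked in the appropriate slots and the correct adjoint (out of $i^*$ versus $i^!$, and $j_!$ versus $j_*$) is used on each side.
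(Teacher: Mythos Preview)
Your proposal is correct and follows essentially the same approach as the paper: both directions hinge on the t-exactness properties of the six functors, the canonical triangles of the recollement, and the unit/counit isomorphisms $j^*j_! \simeq j^*j_* \simeq \mathrm{id}_{\mathcal{X}}$ and $i^*i_* \simeq i^!i_* \simeq \mathrm{id}_{\mathcal{Y}}$. Your treatment of boundedness for the induced t-structure is in fact slightly cleaner, reading it off directly from the defining description of $\mathcal{D}^{\leq n}$ rather than invoking the canonical triangles as the paper does, but the argument is otherwise identical in substance.
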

\begin{proof}
Suppose $(T_\mathcal{X},T_\mathcal{Y})\in\mathcal{T}_\Xcal\times\mathcal{T}_\Ycal$ is a pair of nondegenerate t-structures. Let us check that the nondegeneracy of its BBD-induction $(\Dcal^{\leq0},\Dcal^{\geq0}):=\text{Ind}(T_\Xcal,T_\Ycal)$:
If $Z\in\Dcal^{\leq n}$ (respectively, $Z\in\Dcal^{\geq n}$) for all $n\in\mathbb{Z}$, then $i^*Z\in\Ycal^{\leq n}$ (respectively, $i^!Z\in\Ycal^{\geq n}$) and $j^*Z\in\Xcal^{\leq n}$ (respectively, $j^*Z\in\Xcal^{\geq n}$) for all $n\in\mathbb{Z}$, implying that $i^*Z=j^*Z=0$ (respectively, $i^!Z=j^*Z=0$). By the canonical triangles induced by the recollement we get $Z=0$ and hence the induced t-structure is nondegenerate. Moreover suppose $T_\Xcal$ and $T_\Ycal$ are bounded. For $Z\in\Dcal$, because of the boundedness of $T_\Xcal$ and $T_\Ycal$, there exits integers $m$, $n$, $k$ and $l$ such that $j^*Z\in\Xcal^{\leq m}$ and $\Xcal^{\geq n}$, $i^*Z\in\Ycal^{\leq k}$ and $i^!Z\in\Ycal^{\geq l}$. It follows from the canonical triangles
\[j_!j^!Z\longrightarrow Z \longrightarrow i_*i^*Z \longrightarrow
j_!j^!Z[1]\]
\[i_!i^!Z\longrightarrow Z \longrightarrow j_*j^*Z \longrightarrow
i_!i^!Z[1]\] and from the (left or right) t-exactness of the functors $i_*$, $j_*$ and $j_!$ that $Z\in\Dcal^{\leq \max \left\{ m, k\right\}}$ and $Z\in\Dcal^{\geq \min \left\{n, l\right\}}$. Thus the induced t-structure is bounded as well.

Conversely, let $T=(\Dcal^{\leq0},\Dcal^{\geq0})$ be a nondegenerate t-structure in $\mathcal{D}$ compatible with $\mathcal{R}$. To show the nondegeneracy of the restricting t-structure, we need to show that
$\cap_{n\in\Z}i^*\Dcal^{\leq n} = 0 = \cup_{n\in\Z}i^!\Dcal^{\geq n}$ and $\cup_{n\in\Z}j^*\Dcal^{\leq n} =0 =\cup_{n\in\Z}j^*\Dcal^{\geq n}$.
If $Y\in i^*\Dcal^{\leq n}$ for all $n\in\Z$, then $i_*Y\in i_*i^*\Dcal^{\leq n}$ for all $n\in\Z$. Since $i_*i^*$ is right t-exact, $i_*Y\in\cap_{n\in\Z}\Dcal^{\leq n}=0$. Hence $i_*Y$ and also $Y$ is trivial. The other equalities follows by similar argument.

Moreover suppose now $T$ is bounded. To show the boundedness of restricting t-structures, we need to show $\cup_{n\in\Z}i^*\Dcal^{\leq n} = \Ycal = \cup_{n\in\Z}i^!\Dcal^{\geq n}$ and $\cup_{n\in\Z}j^*\Dcal^{\leq n} =\Xcal =\cup_{n\in\Z}j^*\Dcal^{\geq n}$. For $Y\in\Ycal$, because of the boundedness of $T$, there exists integers $s$ and $t$ such that $i_*Y\in\Dcal^{\leq s}$ and $\Dcal^{\geq t}$. Hence $Y\cong i^*i_*Y$ belongs to $i^*\Dcal^{\leq s}$ and $Y\cong i^!i_*Y$ belongs to $i^!\Dcal^{\geq t}$. Similarly for $X\in\Xcal$, there exists integers $k$ and $l$ such that $j_!X\in\Dcal^{\leq k}$ and $j_*X\in\Dcal^{\geq l}$. It follows that $X\cong j^*j_!X \in j^*\Dcal^{\leq k}$ and $X\cong j^*j_*X\in j^*\Dcal^{\geq l}$.
\end{proof}

There is a natural action of the autoequivalent group $\Aut(\Dcal)$
on the set $\Tcal_{\Dcal}$ of all t-structures in $\Dcal$. Two
t-structures $T_1$ and $T_2$ are called {\em equivalent}, if they
are transformed to each other by an autoequivalence of $\Dcal$, in
other words, the (co-)aisle of $T_1$ is sent to the (co-)aisle of
$T_2$ by some $\Phi\in\Aut(\Dcal)$.

\begin{lemma} Let $T_1$ and $T_2$ be two equivalent t-structures.
Then $T_1$ is a BBD-induction if and only if so is $T_2$.
\label{BBDinduction}
\end{lemma}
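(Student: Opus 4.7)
The statement is symmetric in $T_1$ and $T_2$, so it suffices to prove one direction: assume $T_1=(\Dcal_1^{\leq 0},\Dcal_1^{\geq 0})$ is a BBD-induction with respect to some recollement $\Rcal$ of $\Dcal$ by triangulated categories $\Xcal$ and $\Ycal$ (with functors $i^*,i_*,i^!,j_!,j^*,j_*$), and let $\Phi\in\Aut(\Dcal)$ with quasi-inverse $\Psi$ send the (co)aisle of $T_1$ to the (co)aisle of $T_2=(\Dcal_2^{\leq 0},\Dcal_2^{\geq 0})$. The plan is to exhibit a new recollement with respect to which $T_2$ is BBD-induced.

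The natural candidate is the recollement $\Rcal'$ produced by Lemma \ref{newreco}, whose functors are $\Phi i_*$, $i^*\Psi$, $i^!\Psi$, $\Phi j_!$, $j^*\Psi$, $\Phi j_*$. By Proposition \ref{BBD compat}, to show that $T_2$ is BBD-induced with respect to $\Rcal'$ it is enough to verify that the composition $(\Phi j_!)(j^*\Psi)=\Phi\,j_!j^*\,\Psi$ is right t-exact with respect to $T_2$. Take $Z\in\Dcal_2^{\leq 0}$. Since $\Psi$ sends the aisle of $T_2$ to the aisle of $T_1$, we have $\Psi Z\in\Dcal_1^{\leq 0}$. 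Compatibility of $T_1$ with $\Rcal$ (Proposition \ref{BBD compat} again) gives $j_!j^*\Psi Z\in\Dcal_1^{\leq 0}$. Finally, applying $\Phi$ sends this back into $\Phi(\Dcal_1^{\leq 0})=\Dcal_2^{\leq 0}$, so $\Phi j_!j^*\Psi Z\in\Dcal_2^{\leq 0}$ as required.

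Thus $T_2$ is compatible with $\Rcal'$, hence by Proposition \ref{BBD compat} it is BBD-induced with respect to $\Rcal'$. The reverse implication follows by exchanging the roles of $T_1$ and $T_2$ and using $\Psi$ in place of $\Phi$.

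There is no genuine obstacle here: the whole argument is a transport-of-structure along $\Phi$, and the only thing one needs is precisely the construction in Lemma \ref{newreco} so that the right t-exactness criterion of Proposition \ref{BBD compat} can be checked by conjugating through $\Phi$ and $\Psi$.
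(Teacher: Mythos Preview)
Your proof is correct and follows essentially the same route as the paper: use Lemma~\ref{newreco} to transport the recollement along $\Phi$, then verify compatibility of $T_2$ with the new recollement. The paper simply asserts this last step as ``straightforward to check'' and concludes via Corollary~\ref{Ind-Res}, whereas you spell out the right t-exactness of $\Phi j_!j^*\Psi$ and invoke Proposition~\ref{BBD compat} directly; these are the same argument with different levels of detail.
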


\begin{proof} Suppose $\Phi\in\Aut(\Dcal)$
sends $T_1$ to $T_2$. Assume $T_1$ is a BBD-induction with respect
to a recollement
\begin{equation}\nonumber
\begin{xymatrix}{\mathcal{Y}\ar[r]^{i_*}&\mathcal{D}\ar@<3ex>[l]_{i^!}\ar@<-3ex>[l]_{i^*}\ar[r]^{j^*}&\mathcal{\mathcal{X}}\ar@<3ex>_{j_*}[l]\ar@<-3ex>_{j_!}[l]}.
\end{xymatrix}
\end{equation}
This means $T_1$ is compatible with the recollement. By Lemma
\ref{newreco}, the autoequivalence $\Phi$ induces a new recollement
\begin{equation}\nonumber
\begin{xymatrix}{\mathcal{Y}\ar[r]^{\Phi i_*}&\mathcal{D}\ar@<3ex>[l]_{i^!\Psi}\ar@<-3ex>[l]_{i^*\Psi}\ar[r]^{j^*\Psi}&\mathcal{\mathcal{X}}\ar@<3ex>_{\Phi j_*}[l]\ar@<-3ex>_{\Phi j_!}[l]}.
\end{xymatrix}
\end{equation}
where $\Psi$ is the quasi-inverse of $\Phi$. It is straightforward
to check that $T_2$ is compatible with this new recollement. The
lemma follows now from Corollary \ref{Ind-Res}.
\end{proof}

As mentioned in the preliminaries section, bounded t-structures
whose heart is a length category with finitely many simple objects
are of particular interest when dealing with derived categories of
finite dimensional algebras. A useful relation between the hearts of t-structures in $\Xcal$ and $\Ycal$ and the heart of the t-structure in $\Dcal$ is established in \cite{BBD}.

\begin{proposition}[\cite{BBD} Proposition 1.4.18]\label{abelian quotient}
Let $\mathcal{A}_\mathcal{X}$ and $\mathcal{A}_\mathcal{Y}$ be the
hearts of $T_{\mathcal{X}}$ and $T_{\mathcal{Y}}$, repectively, for
a pair $(T_{\mathcal{X}},T_{\mathcal{Y}})
\in\mathcal{T}_\mathcal{X}\times\mathcal{T}_\mathcal{Y}$. Also, let
$\mathcal{A}$ be the heart of
$\Ind(T_{\mathcal{X}},T_{\mathcal{Y}})$. Then $\Acal_\Xcal\cong \Acal/i_*\Acal_\Ycal$ as abelian categories.
\end{proposition}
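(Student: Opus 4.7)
The plan is to exhibit the functor $j^{*}$ restricted to $\Acal$ as a Serre quotient functor with kernel $i_{*}\Acal_\Ycal$, and then invoke the standard fact that an exact functor admitting a fully faithful right adjoint is (up to equivalence) the quotient by its kernel. Throughout I will freely use the t-exactness properties of the six functors recalled after Proposition \ref{BBD compat}: $j^{*}$ and $i_{*}$ are t-exact, $i^{*}$ and $j_{!}$ are right t-exact, and $i^{!}$ and $j_{*}$ are left t-exact with respect to an induced t-structure.

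First I would check the basic set-up. Since $j^{*}$ is t-exact it restricts to an exact functor $\bar{\jmath}\colon \Acal\to\Acal_\Xcal$, and since $i_{*}$ is t-exact the image $i_{*}\Acal_\Ycal$ is a subcategory of $\Acal$, closed under subobjects, quotients and extensions (using that $i_{*}$ is fully faithful and exact), hence a Serre subcategory. I would then identify $\Ker(\bar{\jmath})$ with $i_{*}\Acal_\Ycal$: the inclusion $i_{*}\Acal_\Ycal\subset\Ker(\bar{\jmath})$ is immediate since $j^{*}i_{*}=0$; conversely if $M\in\Acal$ satisfies $j^{*}M=0$ then the canonical triangle $j_{!}j^{*}M\to M\to i_{*}i^{*}M\to$ shows that $M\cong i_{*}i^{*}M$, and then the right t-exactness of $i^{*}$ together with the left t-exactness of $i^{!}$ (applied to $M\in\Acal\subset\Dcal^{\leq 0}\cap\Dcal^{\geq 0}$) yields $i^{*}M=i^{!}M\in\Acal_\Ycal$.

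The main step is to produce a fully faithful right adjoint to $\bar{\jmath}$. For $X\in\Acal_\Xcal$ the object $j_{*}X$ lies in $\Dcal^{\geq 0}$ (left t-exactness of $j_{*}$), so I set
\[
\iota X := \tau^{\leq 0}(j_{*}X)\;\in\;\Acal.
\]
Adjunction follows from a short computation: for $M\in\Acal$,
\[
\Hom_{\Acal}(M,\iota X)=\Hom_{\Dcal}(M,\tau^{\leq 0}j_{*}X)=\Hom_{\Dcal}(M,j_{*}X)=\Hom_{\Xcal}(j^{*}M,X)=\Hom_{\Acal_\Xcal}(\bar{\jmath}M,X),
\]
where the second equality uses $M\in\Dcal^{\leq 0}$ and the adjunction between inclusion and $\tau^{\leq 0}$. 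Fully faithfulness of $\iota$ amounts to the counit $\bar{\jmath}\iota\Rightarrow\mathrm{id}$ being invertible, which is immediate: $\bar{\jmath}\iota X=j^{*}\tau^{\leq 0}j_{*}X=\tau^{\leq 0}_{\Xcal}j^{*}j_{*}X=\tau^{\leq 0}_{\Xcal}X=X$ by t-exactness of $j^{*}$ and $j^{*}j_{*}\cong\mathrm{id}$. In particular $\bar{\jmath}$ is essentially surjective.

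To conclude I would apply the standard Gabriel characterisation of Serre quotients: an exact functor between abelian categories that admits a fully faithful right adjoint is, up to equivalence, the quotient functor onto the quotient by its kernel. Combined with the previous steps this produces an equivalence $\Acal/i_{*}\Acal_\Ycal\xrightarrow{\sim}\Acal_\Xcal$ induced by $\bar{\jmath}$, as required. The step I expect to need the most care is the fully-faithfulness of $\iota$, because one needs to know that $j^{*}$ genuinely commutes with $\tau^{\leq 0}$ on the induced t-structure; the remaining subtlety is the verification that the would-be kernel $i_{*}\Acal_\Ycal$ is in fact a Serre subcategory of $\Acal$, which as noted reduces to the t-exactness and full faithfulness of $i_{*}$.
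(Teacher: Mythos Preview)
Your argument is correct. The paper itself does not prove this proposition; it is simply quoted from \cite{BBD} without proof, so there is no ``paper's own proof'' to compare against. Your approach---restricting $j^{*}$ to an exact functor $\bar{\jmath}\colon\Acal\to\Acal_\Xcal$, building the right adjoint $\iota=\tau^{\leq 0}j_{*}$, checking that the counit is invertible via the t-exactness of $j^{*}$, and then invoking Gabriel's characterisation of Serre quotients---is the standard way to establish this equivalence and all the steps go through as you describe.

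One small point worth making explicit in step~3: you assert $i^{*}M=i^{!}M$ when $j^{*}M=0$, but the reason is that $M\cong i_{*}i^{*}M$ and applying $i^{!}$ (with $i^{!}i_{*}\cong\mathrm{id}$) gives $i^{!}M\cong i^{*}M$; only then do the opposite t-exactness properties of $i^{*}$ and $i^{!}$ pin the object to $\Acal_\Ycal$. You clearly have this in mind, but as written the sentence could be read as directly applying $i^{*}$ and $i^{!}$ to $M$ without first identifying them.
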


\begin{remark}
Note that, in the notation of the proposition,
$j^*(\mathcal{A})\subset \mathcal{A}_\mathcal{X}$ and
$i_*(\mathcal{A}_\mathcal{Y})\subset \mathcal{A}$. This follows from
the definitions of restriction and induction and Proposition
\ref{Ind-Res}. Moreover, if we identify $\mathcal{Y}$ with its image
by $i_*$, then we get $\mathcal{A}_\mathcal{Y}\cong \mathcal{A}\
\cap$ Im$(i_*)$.
\end{remark}

The following result will be used later.

\begin{proposition} \label{lengthheart-ind}
Let $\mathcal{A}_\mathcal{X}$ and $\mathcal{A}_\mathcal{Y}$ be the
hearts of $T_{\mathcal{X}}$ and $T_{\mathcal{Y}}$, repectively, for
a pair $(T_{\mathcal{X}},T_{\mathcal{Y}})
\in\mathcal{T}_\mathcal{X}\times\mathcal{T}_\mathcal{Y}$. Also, let
$\mathcal{A}$ be the heart of
$\Ind(T_{\mathcal{X}},T_{\mathcal{Y}})$. Then
$\mathcal{A}_\mathcal{X}$ and $\mathcal{A}_\mathcal{Y}$  are length
categories if and only if so is $\mathcal{A}$.
\end{proposition}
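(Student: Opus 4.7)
The plan is to prove both implications using Proposition \ref{abelian quotient}, which identifies $\Acal_\Xcal$ with the Serre quotient $\Acal/i_*\Acal_\Ycal$. For the forward direction, if $\Acal$ is a length category then so is the Serre subcategory $i_*\Acal_\Ycal$ (sub- and quotient-objects of finite-length objects have finite length), hence so is $\Acal_\Ycal$ via the fully faithful exact embedding $i_*$; and $\Acal_\Xcal \cong \Acal/i_*\Acal_\Ycal$ is length because the Serre quotient functor is exact and sends composition series to composition series of length at most that of the original.

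For the converse, suppose $\Acal_\Xcal$ and $\Acal_\Ycal$ are both length categories. For any $X\in\Acal$, I will establish both the ascending and descending chain conditions on sub-objects of $X$, which together force $X$ to have finite length. The main tool is a pair of ``torsion-like'' functors extracted from the recollement. Applying $H^0$ to the triangle
\[i_*i^!Z \longrightarrow Z \longrightarrow j_*j^*Z \longrightarrow i_*i^!Z[1]\]
and using that, by Corollary \ref{Ind-Res}, $j_*$ is left $t$-exact and $i_*$ is $t$-exact with respect to the compatible $t$-structures, one obtains an injection $i_*H^0(i^!Z) \hookrightarrow Z$ in $\Acal$ (since the connecting $H^{-1}(j_*j^*Z)$ vanishes). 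A short adjunction argument based on the identity $\Hom_\Acal(i_*T, Z) \cong \Hom_{\Acal_\Ycal}(T, H^0(i^!Z))$ for $T\in\Acal_\Ycal$ then identifies $i_*H^0(i^!Z)$ as the \emph{maximal} sub-object of $Z$ belonging to $i_*\Acal_\Ycal$. Dualizing with the triangle $j_!j^*Z \to Z \to i_*i^*Z$, one extracts a surjection $Z \twoheadrightarrow i_*H^0(i^*Z)$ presenting the maximal quotient of $Z$ in $i_*\Acal_\Ycal$. Both of these maximal pieces lie in $i_*\Acal_\Ycal$ and hence have finite length.

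Equipped with these, the chain conditions on $X$ follow in two steps. For ACC, a strict ascending chain $X_0 \subsetneq X_1 \subsetneq \cdots$ gives, by exactness of $j^*$, an ascending chain in the length category $\Acal_\Xcal$ which stabilizes at some index $n_0$; for $k\ge n_0$, the quotient $X_k/X_{n_0}$ is killed by $j^*$, so lies in $i_*\Acal_\Ycal$, and these quotients form an ascending chain of sub-objects of the maximal sub-object of $X/X_{n_0}$ in $i_*\Acal_\Ycal$, which has finite length, and hence they stabilize. The DCC argument is dual: after stabilization of $j^*X_k$ at $n_0$, each $X_{n_0}/X_k \in i_*\Acal_\Ycal$ is a quotient of the finite-length maximal quotient of $X_{n_0}$ in $i_*\Acal_\Ycal$, so their lengths are uniformly bounded, forcing stabilization since each successive strict surjection $X_{n_0}/X_{k+1} \twoheadrightarrow X_{n_0}/X_k$ has non-zero kernel and so strictly decreases the length. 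The main obstacle is setting up the maximal-torsion functors cleanly; once the cohomological long exact sequence together with the adjunction yoga is in place, the chain-condition arguments are essentially formal.
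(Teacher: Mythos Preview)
Your argument is correct and follows the same two-step strategy as the paper: for the direction ``$\Acal$ length $\Rightarrow$ $\Acal_\Xcal,\Acal_\Ycal$ length'' both of you invoke Proposition~\ref{abelian quotient}, and for the converse both push a (strict) chain of subobjects in $\Acal$ through the exact functor $j^*$, use the length hypothesis on $\Acal_\Xcal$ to stabilize it, and then show that the residual chain lands in $i_*\Acal_\Ycal\cong\Acal_\Ycal$.

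The one genuine difference is in how the residual chain inside $i_*\Acal_\Ycal$ is forced to terminate. The paper produces a strict tower of epimorphisms $\cdots \twoheadrightarrow Y_{l+1}\twoheadrightarrow Y_l\twoheadrightarrow\cdots\twoheadrightarrow Y_1$ with all $Y_l\in i_*\Acal_\Ycal$ and declares this a contradiction to $\Acal_\Ycal$ being a length category; but a length category can perfectly well contain such unbounded towers (take $\K^{l+1}\twoheadrightarrow\K^{l}$ in finite-dimensional vector spaces), so a fixed finite-length ambient object is still needed. Your construction of the maximal $i_*\Acal_\Ycal$-subobject $i_*H^0(i^!-)$ and maximal $i_*\Acal_\Ycal$-quotient $i_*H^0(i^*-)$ supplies precisely this bound: once every $X_k/X_{n_0}$ is a subobject of the finite-length object $i_*H^0\bigl(i^!(X/X_{n_0})\bigr)$ (respectively a quotient of $i_*H^0(i^*X_{n_0})$ in the DCC case), stabilization is immediate. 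So your detour through the torsion-pair functors is not merely decorative---it actually completes the paper's argument. In the paper's own notation, the missing sentence is that each $Y_l$, being a quotient of $Z_a$ lying in $i_*\Acal_\Ycal$, is a quotient of the finite-length object $i_*H^0(i^*Z_a)$.
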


\begin{proof}
Suppose $\mathcal{A}_\mathcal{X}$ and $\mathcal{A}_\mathcal{Y}$ are length hearts and
let $Z\in\mathcal{A}$ be an object of infinite length, i.e., there
exists an infinite sequence of strict monomorphisms of $\mathcal{A}$
\begin{equation}\nonumber
\begin{xymatrix}{... \ar[r]^{f_{k-2}}& Z_{k-1}\ar[r]^{f_{k-1}}& Z_k \ar[r]^{f_k}& Z_{k+1}\ar[r]^{f_{k+1}}& ... \ar[r]& Z,}\end{xymatrix}
\end{equation}
where $k$ runs over an index set $K\subset\mathbb{Z}$ of consecutive
integers, whose image by $j^*$ is a chain of monomorphisms
\begin{equation}\nonumber
\begin{xymatrix}{... \ar[r]^{j^*f_{k-2}\ \ }& j^*Z_{k-1}\ar[r]^{j^*f_{k-1}\ }& j^*Z_k \ar[r]^{j^*f_k\ \ }& j^*Z_{k+1}\ar[r]^{j^*f_{k+1}}& ... \ar[r]& j^*Z}\end{xymatrix}
\end{equation}
that cannot be strict since $\mathcal{A}_\mathcal{X}$ is a length
category. Since $j^*$ is exact, this means that $j^*X_k\neq 0$ for
only finitely many $k\in K$, where $X_k:=$coker$(f_k)$. Let us say
that $j^*X_k=0$ for all $k\in K$ such that $k\leq a$ and $k\geq b$
for some $a,b\in K$. Hence, $K_{\leq a}:=\left\{k\in K: k\leq
a\right\}$ is infinite or $K_{\geq b}:=\left\{k\in K: k\geq
b\right\}$ is infinite. Suppose $K_{\leq a}$ is infinite and
consider the objects $Y_l:=$coker$(Z_{a-l}\rightarrow Z_a)$, for
$l\in\mathbb{N}$, where the map is the composition
$f_{a-l}f_{a-l+1}...f_{a-1}$. Clealry $j^*(Y_l)=0$ (since
$j^*(Z_{a-l})\cong j^*(Z_a)$) and hence $Y_l\in\mathcal{H}\ \cap$
Im$(i_*)$. But this means we have a strict chain of epimorphisms
\begin{equation}\nonumber
...\rightarrow Y_{l+1} \rightarrow Y_{l}\rightarrow
Y_{l-1}\rightarrow ... \rightarrow Y_1
\end{equation}
which is a contradiction since
$\mathcal{A}_\mathcal{Y}\cong\mathcal{A}\ \cap$ Im$(i_*)$ is a
length category. Similarly, if $K_{\geq b}$ is infinite, then one
can define $Y_l:=$coker$(Z_b\rightarrow Z_{b+l})$ and the same
argument will hold.

Conversely, if $\Acal$ is a length heart, then it is easy to check that so is every full subcategory and every quotient category of $\Acal$. Then, by Proposition \ref{abelian quotient}  $\Acal_\Ycal$ and $\Acal_\Xcal$ are length categories.
\end{proof}

\begin{remark}
We can, moreover, say something about the simple objects of the induced heart, as stated in \cite[Proposition 1.4.26]{BBD}. In the notation of the above proposition, it is easy to see that the adjunction morphisms given by the recollement induce a natural transformation of functors from $\Acal_\Xcal$ to $\Acal$
\begin{equation}\nonumber
\phi: H^0j_!\epsilon \rightarrow H^0j_*\epsilon
\end{equation}
where $H^0$ is the cohomological functor associated to the induced t-structure $T$ and $\epsilon$ is the natural embedding of $\Acal_\Xcal$ in $\Xcal$. Define the functor $j_{!*}$ from $\Acal_\Xcal$ to $\Acal$ by setting $j_{!*}(X):=$Im $\phi(X)$. Then it can be proved that the simple objects of $\Acal$ are of the form $i_*Y$ for $Y$ simple in $\Ycal$ or of the form $j_{!*}X$ for $X$ simple in $\Xcal$.
\end{remark}

\end{section}

\begin{section}{t-structures for semisimple algebras}

In this section we classify t-structures for semisimple algebras.

Let $\mathcal{D}_1$ and $\mathcal{D}_2$ be two triangulated
categories with t-structures $T_1 = (\Dcal_1^{\leq 0}, \Dcal_1^{\geq
0})$ and $T_2 = (\Dcal_2^{\leq 0}, \Dcal_2^{\geq 0})$ respectively.
It is clear that the direct sum $T_1 \oplus T_2$, defined to be
$(\Dcal_1^{\leq 0} \oplus \Dcal_2^{\leq 0}, \Dcal_1^{\geq 0} \oplus
\Dcal_2^{\geq 0})$, is a t-structure in $\Dcal_1 \oplus \Dcal_2$.
Conversely all t-structures in $\Dcal_1 \oplus \Dcal_2$ arise in
this way.

\begin{lemma} Every t-structure in the direct sum $\Dcal_1 \oplus
\Dcal_2$ is of the form
\begin{equation}\nonumber
(\Dcal_1^{\leq 0} \oplus \Dcal_2^{\leq 0},
\Dcal_1^{\geq 0} \oplus \Dcal_2^{\geq 0}),
\end{equation}
where $(\Dcal_1^{\leq
0}, \Dcal_1^{\geq 0})$ and $(\Dcal_2^{\leq 0}, \Dcal_2^{\geq 0})$
are t-structures in $\Dcal_1$ and $\Dcal_2$ respectively.
\end{lemma}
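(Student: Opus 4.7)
The plan is to realise the direct sum decomposition $\Dcal_1 \oplus \Dcal_2$ as a (split) recollement and then invoke the machinery of Section~3. Take $\Xcal := \Dcal_1$ and $\Ycal := \Dcal_2$, with $i_* = i_!$ the inclusion of $\Dcal_2$, $j_! = j_*$ the inclusion of $\Dcal_1$, and $i^* = i^!$, $j^* = j^!$ the corresponding projections. All six adjunctions and the full-embedding axiom are automatic; the vanishing $i^! j_* = 0$ and the recollement triangles $Z_2 \to Z_1 \oplus Z_2 \to Z_1 \to Z_2[1]$ (with zero connecting morphism) follow at once from the orthogonality $\Hom_\Dcal(\Dcal_1, \Dcal_2) = 0 = \Hom_\Dcal(\Dcal_2, \Dcal_1)$.

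I would then argue that every t-structure $T = (\Dcal^{\leq 0}, \Dcal^{\geq 0})$ in $\Dcal_1 \oplus \Dcal_2$ is automatically compatible with this recollement. By Proposition~\ref{BBD compat} it suffices to check $j_! j^* \Dcal^{\leq 0} \subset \Dcal^{\leq 0}$, i.e.\ that $Z_1 \oplus Z_2 \in \Dcal^{\leq 0}$ implies $Z_1 \in \Dcal^{\leq 0}$. This closure of the aisle under direct summands is immediate from the orthogonal description recalled in Section~2: for every $W \in \Dcal^{\geq 1}$, $\Hom_\Dcal(Z_1, W)$ is a summand of $\Hom_\Dcal(Z_1 \oplus Z_2, W) = 0$.

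With compatibility in hand, Corollary~\ref{Ind-Res} produces a unique pair $(T_1, T_2) \in \Tcal_{\Dcal_1} \times \Tcal_{\Dcal_2}$ with $T = $ Ind$(T_1, T_2)$. Unfolding the definition of BBD-induction for this split recollement, $Z = Z_1 \oplus Z_2$ lies in the aisle of $T$ if and only if $j^* Z = Z_1 \in \Dcal_1^{\leq 0}$ and $i^* Z = Z_2 \in \Dcal_2^{\leq 0}$; that is, $\Dcal^{\leq 0} = \Dcal_1^{\leq 0} \oplus \Dcal_2^{\leq 0}$. The analogous statement for the coaisle follows by the same calculation, using $i^! = i^*$ for this particular recollement.

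The only mildly subtle step is verifying compatibility, which reduces to the standard closure of aisles under direct summands. If one prefers to bypass the recollement framework altogether, the same result can be obtained directly: define $\Dcal_i^{\leq 0} := \Dcal_i \cap \Dcal^{\leq 0}$ and $\Dcal_i^{\geq 0} := \Dcal_i \cap \Dcal^{\geq 0}$, apply the ambient truncation to $Z_i \in \Dcal_i$, and use the componentwise decomposition of morphisms (together with $\Dcal^{\leq 0} \cap \Dcal^{\geq 1} = 0$) to force the truncation triangle to live inside $\Dcal_i$; the decomposition $\Dcal^{\leq 0} = \Dcal_1^{\leq 0} \oplus \Dcal_2^{\leq 0}$ then follows from closure under summands and (split) extensions.
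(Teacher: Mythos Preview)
Your proof is correct and follows essentially the same approach as the paper: set up the split recollement, verify compatibility by noting that aisles are closed under direct summands (as left perpendiculars of the coaisle shifted), and then invoke Corollary~\ref{Ind-Res}. The only cosmetic difference is that the paper places $\Dcal_1$ on the $\Ycal$-side and $\Dcal_2$ on the $\Xcal$-side, whereas you reverse these roles; by the symmetry of the situation this is immaterial.
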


\begin{proof} Consider the recollement
\begin{equation}\nonumber
\begin{xymatrix}{\mathcal{D}_1\ar[r]^{i_*}&\mathcal{D} \ar@<3ex>[l]_{i^!}\ar@<-3ex>[l]_{i^*}\ar[r]^{j^*}&\mathcal{D}_2\ar@<3ex>_{j_*}[l]\ar@<-3ex>_{j_!}[l]}.
\end{xymatrix}
\end{equation}\\
where $\Dcal = \Dcal_1 \oplus \Dcal_2$, $i_*:\Dcal_1 \ra \Dcal$ and
$j_! = j_*: \Dcal_2 \ra \Dcal$ are canonical full embeddings, and
$i^*=i^!: \Dcal \ra \Dcal_1$ and $j^*: \Dcal \ra \Dcal_2$ are
canonical projections. The BBD-induction of t-structures $T_1$ in
$\Dcal_1$ and $T_2$ in $\Dcal_2$ is exactly the direct sum $T_1
\oplus T_2$. Conversely, by Corollary \ref{Ind-Res}, to show a
t-structure $T = (\Dcal^{\leq 0}, \Dcal^{\geq 0})$ in $\Dcal$ arises
in this way, it suffices to show it is compatible with the given
recollement, namely that $j_!j^*\Dcal^{\leq 0} \subset \Dcal^{\leq 0}$. Now,
given an object $X \in \Dcal^{\leq 0}$, it is a direct sum of $X_1 \oplus X_2$
for $X_i \in \Dcal_i$ ($i=1,2$). The aisle $\Dcal^{\leq 0}$ is the left perpendicular category  of $\Dcal^{\geq
1}$ (with respect to the pairing Hom$_{\Dcal}(-,-)$) and thus $\Dcal^{\leq 0}$ is closed under taking direct summand.
Hence $j_!j^*(X) = j_! (X_2) = X_2$ belongs to $\Dcal^{\leq 0}$, as wanted.
\end{proof}

Let $\mathbb{K}$ be any field. The Artin-Wedderburn's theorem says finite
dimensional semisimple $\mathbb{K}$-algebras are built up by matrix
algebras over division rings. In other words they are the of the
form $M_{n_1}(\D_1) \times \ldots \times M_{n_s}(\D_s)$, where
$\D_i$ is a division ring over $\mathbb{K}$ and $n_i$ a natural
numer (for all $i$).

\begin{lemma}\label{div ring}
Let $\D$ be a division ring. Then a t-structure in
$\mathcal{D}^b(\mathbb{D})$ is either trivial (i.e., it has a trivial
aisle or a trivial coaisle) or it is a shift of the standard
t-structure.
\end{lemma}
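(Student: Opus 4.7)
The strategy is to exploit the semisimplicity of $\mathbb{D}$: every object of $\mathcal{D}^b(\mathbb{D})$ is isomorphic to a finite direct sum of shifts $\mathbb{D}[n]$, since every $\mathbb{D}$-module is free and $\Ext^k(\mathbb{D},\mathbb{D})$ vanishes for $k\neq 0$, so every bounded complex of $\mathbb{D}$-modules is quasi-isomorphic to its cohomology. Thus to pin down an aisle it suffices to determine which indecomposables $\mathbb{D}[n]$ it contains.

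Given a t-structure $(\mathcal{D}^{\leq 0},\mathcal{D}^{\geq 0})$ in $\mathcal{D}^b(\mathbb{D})$, I would first invoke the fact that the aisle is closed under direct summands, which follows from $\mathcal{D}^{\leq 0}={}^{\perp}(\mathcal{D}^{\geq 1})$, exactly as already used in the proof of the preceding lemma. Aisles are also closed under extensions, and hence under finite direct sums. Combining these observations with the decomposition above, the aisle is entirely determined by the set
$$S:=\{n\in\mathbb{Z}:\mathbb{D}[n]\in\mathcal{D}^{\leq 0}\},$$
in the sense that $\bigoplus_i\mathbb{D}[n_i]\in\mathcal{D}^{\leq 0}$ if and only if each $n_i\in S$.

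Next I would observe that the axiom $\mathcal{D}^{\leq 0}\subset\mathcal{D}^{\leq 1}=\mathcal{D}^{\leq 0}[-1]$ is equivalent to $\mathcal{D}^{\leq 0}[1]\subset\mathcal{D}^{\leq 0}$, which gives $n\in S\Rightarrow n+1\in S$. Hence $S$ is an upward-closed subset of $\mathbb{Z}$, and there are only three possibilities: $S=\emptyset$, $S=\mathbb{Z}$, or $S=\{n\geq k\}$ for some $k\in\mathbb{Z}$. These correspond respectively to (i) $\mathcal{D}^{\leq 0}=0$, a trivial aisle; (ii) $\mathcal{D}^{\leq 0}=\mathcal{D}^b(\mathbb{D})$, which forces $\mathcal{D}^{\geq 1}=(\mathcal{D}^{\leq 0})^{\perp}=0$, a trivial coaisle; and (iii) $\mathcal{D}^{\leq 0}=\mathcal{D}_0^{\leq -k}$, since the standard aisle shifted to have cohomology in degrees $\leq -k$ is precisely the class of finite direct sums of $\mathbb{D}[n]$ with $n\geq k$. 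The coaisle is then forced by right-orthogonality to be $\mathcal{D}_0^{\geq -k}$, so the t-structure is a shift of the standard one.

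The only mildly delicate ingredient is the closure of aisles under direct summands; however this is a standard property of aisles that has already appeared earlier in the excerpt, after which the proof reduces to the elementary classification of upward-closed subsets of $\mathbb{Z}$.
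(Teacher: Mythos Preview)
Your proof is correct and follows essentially the same approach as the paper's: both identify the indecomposable objects of $\mathcal{D}^b(\mathbb{D})$ as the shifts $\mathbb{D}[n]$ and then determine the aisle via the upward-closed set of integers $n$ with $\mathbb{D}[n]\in\mathcal{D}^{\leq 0}$. Your write-up is more explicit about closure under direct summands and the trichotomy on $S$, but the underlying argument is the same.
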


\begin{proof}
The only indecomposable objects of $\mathcal{D}^b(\mathbb{D})$ are
$\mathbb{D}[p]$ for $p\in\mathbb{Z}$. If the t-structure is
non-trivial, then there is a minimal $p_0$ such that
$\mathbb{D}[p_0]\in \mathcal{D}^{\leq 0}$  (and thus
$\mathbb{D}[p]\in \mathcal{D}^{\leq 0}$ for all $p\geq p_0$).
Clearly no other indecomposable can lie in $\mathcal{D}^{\leq 0}$ by
minimality of $p_0$ and thus $\mathcal{D}^{\leq 0}$ coincides with
$\mathcal{D}_0^{\leq 0}[-p_0]$, where $\mathcal{D}_0^{\leq 0}$ is
the standard aisle for $\mathcal{D}^b(\mathbb{D})$.
\end{proof}

We identify $\mathcal{T}_{\mathcal{D}^b(\mathbb{D})}$ with the set
$\bar{\mathbb{Z}}:=\mathbb{Z}\cup\left\{-\infty,+\infty\right\}$
where an integer $n\in\mathbb{Z}$ represents the t-structure
associated with the aisle $\mathcal{D}_0^{\leq n}$, $-\infty$
represents the trivial t-structure given by $\mathcal{D}^{\leq 0}=0$
and $+\infty$ represents the trivial t-structure given by
$\mathcal{D}^{\leq 0}=\mathcal{D}^b(\D)$. Those t-structures indexed
by $n\in\Z$ are bounded, and they are equivalent by shifts in the
derived category $\Dcal^b(\D)$. So, up to derived autoequivalences, there are only
three t-structures in $\Tcal_{\Dcal^b(\D)}$ and there is only one among those that is bounded.

\begin{corollary} Let $A = M_{n_1}(\D_1) \times \ldots \times M_{n_s}(\D_s)$
be a semisimple algebra over $\K$ with $s$ blocks. Then the
t-structures in the derived category $\Dcal^b(A)$ of $A$ is indexed by
$\bar{\Z} \times \ldots \times \bar{\Z}$ ($s$ times). Among them
those indexed by $\Z\times \ldots \times \Z$ ($s$ times) are
bounded.
\end{corollary}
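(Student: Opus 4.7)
The plan is to reduce the classification for a general semisimple algebra to the classification already obtained for a single division ring (Lemma \ref{div ring}), using Morita invariance of derived categories together with the direct sum lemma for t-structures.

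First, I would observe that for each block $M_{n_i}(\D_i)$ is Morita equivalent to $\D_i$, so the derived categories $\Dcal^b(M_{n_i}(\D_i))$ and $\Dcal^b(\D_i)$ are equivalent as triangulated categories. Hence, under the natural identification coming from the block decomposition of $A$, we have a triangle equivalence
\begin{equation}\nonumber
\Dcal^b(A) \cong \Dcal^b(\D_1) \oplus \cdots \oplus \Dcal^b(\D_s).
\end{equation}
Since equivalences of triangulated categories induce bijections between their sets of t-structures, it suffices to classify the t-structures on the right-hand side.

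Next, I would apply the preceding lemma on direct sums of t-structures, iterated $s-1$ times (or equivalently, proved by induction on $s$ using the recollement in which the middle term is a direct sum). This shows that every t-structure on $\Dcal^b(\D_1) \oplus \cdots \oplus \Dcal^b(\D_s)$ is uniquely of the form $T_1 \oplus \cdots \oplus T_s$ for t-structures $T_i$ on each summand $\Dcal^b(\D_i)$. At this point Lemma \ref{div ring} identifies each $\Tcal_{\Dcal^b(\D_i)}$ with $\bar{\Z}$, so the set of t-structures on $\Dcal^b(A)$ is in bijection with $\bar{\Z}^s$, as claimed.

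For the boundedness statement, I would note that a direct sum $T_1\oplus\cdots\oplus T_s$ is bounded precisely when each $T_i$ is bounded. This follows either directly from the definition of boundedness applied to each summand, or from Lemma \ref{Ind-res bdd} applied to the recollements used in the induction step (since the equivalences $\Dcal^b(M_{n_i}(\D_i))\cong \Dcal^b(\D_i)$ clearly preserve boundedness of t-structures). In the indexing set $\bar{\Z}^s$, the finite entries $n_i\in\Z$ correspond exactly to the bounded t-structures in each factor (while $\pm\infty$ correspond to the two degenerate t-structures). Hence a t-structure on $\Dcal^b(A)$ is bounded if and only if its index lies in $\Z^s$, finishing the proof. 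The only subtle point is making the direct sum decomposition of t-structures rigorous for more than two summands, but this is handled by a straightforward induction on $s$.
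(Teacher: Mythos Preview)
Your proposal is correct and follows exactly the approach the paper intends: the corollary is stated without proof as an immediate consequence of the Artin--Wedderburn decomposition, Morita invariance, the direct-sum lemma, and Lemma~\ref{div ring}, and you have spelled out precisely these steps. The only additional detail you supply is the explicit handling of boundedness via Lemma~\ref{Ind-res bdd}, which is entirely in line with the paper's setup.
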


\begin{remark}
A similar observation to the above allows us to conclude that, up to derived autoequivalences, there is only one bounded t-structure. Indeed, for any n-tuple of integers, there is an autoequivalence of the derived category taking the correspondent t-structure to the standard one - and this autoequivalence is just built from a suitable choice of triangulated shifts in each component.
\end{remark}

\end{section}

\begin{section}{Bounded t-structures in $A_n$}
In this section we prove that any bounded t-structure in
$\mathcal{D}^b(A_n)$, where $A_n$ is the path algebra of
\begin{equation}\nonumber
\begin{xymatrix}{\bullet_1\ar[r]&\bullet_2\ar[r]&...\ar[r]&\bullet_{n-1}\ar[r]&\bullet_n}\end{xymatrix}
\end{equation}
over a field $\mathbb{K}$, is BBD-induced with respect to the
recollement associated with an idempotent $e_r$ ($r=1,2,\ldots,n$).
Recall that given an idempotent $e$ on a hereditary algebra $A$ we have
a recollement $\Rcal_e$ (sometimes denoted $\Rcal_r$ if $A$ is a path algebra and $e=e_r$) of the form(\cite{PS, ALK1}):
\begin{equation}\nonumber
\begin{xymatrix}{\mathcal{D}^b(A/AeA)\ar[r]^{\ \ \ i_*}&\mathcal{D}^b(A)\ar@<3ex>[l]_{\ \ \ i^!}\ar@<-3ex>[l]_{\ \ \ i^*}\ar[r]^{j^*}&\mathcal{D}^b(eAe)\ar@<3ex>_{j_*}[l]\ar@<-3ex>_{j_!}[l]}.
\end{xymatrix}
\end{equation}
where the functors can all be explicitly described as derived
functors.  For our purposes we just need
\begin{equation}\nonumber
j^*=-\otimes_A^L Ae, \ \ \
j_!=-\otimes_{eAe}^L A.
\end{equation}
%The same functors induce also an analogous recollement with unbounded derived categories of finite dimensional $A$-modules, for which we also use the notation $\Rcal_e$ where $e$ is an idempotent. It will be clear in the context which recollement is being used.

\begin{theorem}
For any bounded t-structure $T=(\mathcal{D}^{\leq 0},\mathcal{D}^{\geq 0})$
in $\Dcal^b(A_n)$, there is an idempotent $e_r$ ($1\leq r\leq n$) such that $T$ is compatible
with $\mathcal{R}_r$.
\end{theorem}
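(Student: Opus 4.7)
The plan is to translate the compatibility condition $j_!j^\ast \mathcal{D}^{\leq 0}\subset \mathcal{D}^{\leq 0}$ of Proposition \ref{BBD compat} into a combinatorial statement about the ``positions'' of the indecomposable $A_n$-modules in the t-structure $T$, and then to choose the idempotent $e_r$ by an extremal principle. First I would reduce to checking the condition on indecomposables: $\mathcal{D}^b(A_n)$ is Krull--Schmidt and the aisle is closed under direct summands (as noted in the proof of the direct-sum lemma of Section 4), so it suffices to verify $j_!j^\ast(X)\in \mathcal{D}^{\leq 0}$ for each indecomposable $X=M_{ij}[k]\in \mathcal{D}^{\leq 0}$, where $M_{ij}$ denotes the interval module supported on $\{i,\ldots,j\}$. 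Since $e_rA_ne_r\cong \K$, both $j^\ast=-\otimes^L_{A_n}A_ne_r$ and $j_!=-\otimes^L_{\K}e_rA_n$ are exact; a direct computation gives
\[
j_!j^\ast(M_{ij}[k])=\begin{cases}P_r[k]=M_{rn}[k] & \text{if } r\in[i,j],\\ 0 & \text{otherwise.}\end{cases}
\]

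By Lemma \ref{contains standard aisle} applied to $\mathrm{mod}(A_n)$ and its dual, together with boundedness of $T$, for every $(i,j)$ the integer $k_{ij}:=\min\{k\in\Z : M_{ij}[k]\in \mathcal{D}^{\leq 0}\}$ is well-defined and finite. The compatibility condition for the recollement $\mathcal{R}_r$ then becomes: $k_{rn}\leq k_{ij}$ for every pair $(i,j)$ with $i\leq r\leq j$. I would choose $r^\ast$ to be the \emph{largest} index in $\{1,\ldots,n\}$ such that $k_{rn}$ attains the minimum $\mu:=\min_{l}k_{ln}$. For pairs $(i,n)$ the inequality $\mu=k_{r^\ast,n}\leq k_{in}$ is immediate by definition of $\mu$. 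The essential case is $(i,j)$ with $i\leq r^\ast\leq j<n$, for which I would use the short exact sequences
\[
0\to P_{j+1}\to P_i\to M_{ij}\to 0,\qquad 0\to M_{r^\ast,j}\to M_{ij}\to M_{i,r^\ast-1}\to 0,
\]
and their rotations in $\mathcal{D}^b(A_n)$, combined with the strict inequality $k_{j+1,n}\geq \mu+1$ forced by the maximality of $r^\ast$ (since $j+1>r^\ast$), to derive $\mu\leq k_{ij}$.

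The main obstacle will be assembling the inequalities coming from the three rotations of each of the two triangles above into a single bound. Each triangle alone only yields bounds of the form $k_{\bullet}\leq \max(\cdot,\cdot)+1$, which are off by one from what is needed; the extra +1 must be absorbed using the strict gap $k_{j+1,n}-\mu\geq 1$ provided by the maximality of $r^\ast$. The cleanest route is to argue by contradiction: assuming $k_{ij}<\mu$ for some $(i,j)$ with $r^\ast\in[i,j]$, the rotated aisle inequality coming from the presentation $P_{j+1}\to P_i\to M_{ij}$ pushes $k_{in}$ strictly above $\mu$, while the sub-quotient triangle propagates this to force $k_{r^\ast n}>\mu$, contradicting the choice of $r^\ast$. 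In the case $n=2$ this collapses to the explicit verification of the examples listed in this section.
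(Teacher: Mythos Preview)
Your reduction is clean and correct: the compatibility condition with $\mathcal{R}_r$ does translate exactly into the inequalities $k_{rn}\leq k_{ij}$ for all $(i,j)$ with $i\leq r\leq j$, and choosing $r^\ast$ as the largest minimiser of $l\mapsto k_{ln}$ is a sensible extremal principle. This is a genuinely different strategy from the paper's proof, which normalises so that $\mathcal{D}_0^{\leq -1}\subset\mathcal{D}^{\leq 0}\not\supset\mathcal{D}_0^{\leq 0}$, introduces the set $\mathcal{S}=\{M\in\mathrm{mod}(A_n):M\in\mathcal{D}^{\leq 0}\}$, and then does a three-way case split on the extremal indices occurring in $\mathcal{S}$, producing in each case an idempotent $r$ for which $\mathcal{S}$ is either killed by $j^*_r$ or mapped into the aisle by $j^r_!j^*_r$.

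However, the final step of your argument has a real gap. You correctly derive $k_{in}\geq\mu+1$ from the presentation triangle $M_{j+1,n}\to M_{in}\to M_{ij}$ together with $k_{ij}<\mu$ and $k_{j+1,n}\geq\mu+1$. But the propagation you claim next does not work: from the sub-quotient triangle $M_{r^\ast,n}\to M_{in}\to M_{i,r^\ast-1}$ one obtains only
\[
k_{in}\leq\max(k_{r^\ast,n},\,k_{i,r^\ast-1})=\max(\mu,\,k_{i,r^\ast-1}),
\]
which forces $k_{i,r^\ast-1}>\mu$, \emph{not} $k_{r^\ast,n}>\mu$. The other sub-quotient triangle $M_{r^\ast,j}\to M_{ij}\to M_{i,r^\ast-1}$ likewise fails to close the loop: its rotation gives $k_{i,r^\ast-1}\leq\max(k_{ij},\,k_{r^\ast,j}-1)$, which would yield a contradiction only if you knew $k_{r^\ast,j}=\mu$, whereas a priori you have only $k_{r^\ast,j}\geq\mu$ (and $k_{r^\ast,j}$ can exceed $\mu$ when $k_{j+1,n}>\mu+1$). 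So neither triangle, nor their obvious combination, produces the promised contradiction $k_{r^\ast,n}>\mu$.

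In small cases ($A_2$, $A_3$) one can check by hand that your choice of $r^\ast$ does always work, but the verification requires chaining several more triangle inequalities than you indicate, and it is not clear how to organise this into a uniform argument for general $n$ without a further extremal choice (for instance, picking the violating pair $(i,j)$ with some minimality property) or an inner induction. As written, the ``cleanest route'' you describe is incomplete; you either need to supply the missing chain of inequalities or fall back on a case analysis closer in spirit to the paper's.
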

\begin{proof}
We start with a simple homological computation. Since the algebra is
hereditary we represent any indecomposable object by its projective
resolution. Note that in such a projective resolution each
projective has atmost multiplicity one. Also, we use the convention
$P_0=0$. For an idempotent $r$, denote the functors in
$\mathcal{R}_r$ by $j_{!}^r, j^*_r$. We observe that
for $1\leq k\leq n$, $0\leq l\leq n-1$ we have
\begin{equation}\nonumber
j_{!}^rj^*_r(P_l\rightarrow P_k) =   \left\{ \begin{array}{ll}
        0\ \  {\rm if}\  r\leq l\ {\rm or}\ r>k \\
        P_r\ \ {\rm if}\ l<r\leq k
    \end{array} \right.
\end{equation}

Let $T=(\mathcal{D}^{\leq
0},\mathcal{D}^{\geq 0})$ be a bounded t-structure in $\mathcal{D}^b(A_n)$. By lemma \ref{contains standard aisle},
$\mathcal{D}^{\leq 0}$ contains a shift of the standard
aisle. Without loss of generality, suppose that
$\mathcal{D}_0^{\leq -1}\subset\mathcal{D}^{\leq 0}$ and
$\mathcal{D}_0^{\leq 0}\not\subset\mathcal{D}^{\leq 0}$. Now let
$\mathcal{S}$ be the finite set of indecomposable objects lying in
$\mathcal{D}^{\leq 0}\cap {\rm mod}(A_n)$ where ${\rm mod}(A_n)$ is
identified with the heart of the standard t-structure. It is clear that for a negative integer $k$, the intersection $\Dcal^{\leq 0}\cap ({\rm mod} (A_n)[k])$ is contained in the $k$-th shift $\mathcal{S}[k]$ of $\mathcal{S}$. If $\mathcal{S}=\emptyset$ then the t-structure is a shift of the standard t-structure and it is induced %(by the same shift of the corresponding standard t-structures)
with respect to any recollement $\mathcal{R}_r$. Suppose $\mathcal{S}\neq\emptyset$. Define
\begin{equation}\nonumber
\begin{array}{c}
m_0:= {\rm max}\left\{k: P_l\rightarrow P_k \in \mathcal{S}\right\} \\
m_1:= {\rm min}\left\{l: P_l\rightarrow P_{k} \in
\mathcal{S}\right\}.
\end{array}
\end{equation}
Note that $1\leq m_0\leq n$ and $0\leq m_1\leq n-1$. We consider the following three cases:
%We choose $r$ as follows:
\begin{enumerate}
\item $(m_0,m_1)\neq (n,0)$;  %then choose $r>m_0$ or $r \leq m_1$;
\item $(m_0,m_1)=(n,0)$ and $P_n\in\mathcal{S}$;% choose $r=n$;
\item $(m_0,m_1)=(n,0)$ and $P_n\notin\mathcal{S}$;% choose $r=$max$\left\{k:P_k\in\mathcal{S}\right\}+1$.
\end{enumerate}
and we prove, case by case, the existence of a vertex $r$ such that the given t-structure is
compatible with $\mathcal{R}_r$, i.e., the aisle $\Dcal^{\leq 0}$ is closed under the functor $j_!^rj^*_r$.
%Note that to check compatibility condition (1) it is enough to check that $\mathcal{S}$ is stable
%under $j_!^rj^*_r$. Note also that Im$(i_*^r)=$Im$(i_*^ri^*_r)$. We analyse each of the cases above separately.
%Note also that to check compatibility condition (2) it is enough to check that the truncation of the indecomposable objects in Im$(i_*)\cap$ mod$(A_n)$ still lie in the image of $i_*$. Indeed, this is automatic for the indecomposables in Im$(i_*)\cap$ mod$(A_n)[1]$ (by assumption on the t-structure) and

Case (1): Suppose $m_0\neq n$ and we choose $r>m_0$ (e.g. $r=n$). For
$P_l\rightarrow P_k \in \mathcal{S}$ it holds $k\leq m_0< r$. Hence the image of
$\mathcal{S}$ by the functor $j^{r}_!j^*_r$ is zero. Since $\Dcal^{\leq -1}_0$ is always closed under the functor $j_!^rj^*_r$, the compatibility condition holds. Suppose $m_1 \neq 0$ then we choose $r\leq m_1$ (e.g. $r=m_1$). It follows also that $\mathcal{S} \subset \Ker(j_!^rj^*_r)$, and hence the compatibility condition holds.

Case (2): Suppose now $(m_0,m_1)=(n,0)$ and $P_n\in\mathcal{S}$. Therefore, $P_s\rightarrow P_n\in\mathcal{S}$ for all $0\leq s\leq n-1$. If, for any negative integer $k\in\Z_{<0}$, whenever we have $(P_s\ra P_n)[k]$ for some $1\leq s\leq n-1$ in the aisle we also have $P_n[k]$ in the aisle, then the idempotent $r=n$ fulfills the compatibility condition. Assume now that for some negative integer $k$, $P_n[k]$ does not belong to the aisle $\Dcal^{\leq 0}$ but both $P_n[k+1]$ and $(P_s\ra P_n)[k]$ (for some $1\leq s\leq n-1$) do. We take the minimal $s_0$ such that $(P_{s_0} \ra P_n)[k]$ belongs to $\Dcal^{\leq 0}$. From the triangle
\[(P_{s_0} \ra P_n)[k] \ra P_{s_0}[k+1] \ra P_n[k+1] \ra (P_{s_0}\ra P_n)[k+1]\]
we see that $P_{s_0}[k+1]$ belongs to $\Dcal^{\leq 0}$. For $0\leq t <s_0$ and $s_0\leq m\leq n$, the triangle
\[(P_t\ra P_m)[k] \ra (P_t\ra P_n)[k] \oplus (P_{s_0}\ra P_m)[k] \ra (P_{s_0}\ra P_n)[k] \ra (P_t\ra P_m)[k+1],\]
where $P_{s_0}\ra P_m$ is viewed as zero when $s_0=m$, shows that $(P_t\ra P_m)[k]$ does not belong to $\Dcal^{\leq 0}$ (because of the minimality of $s_0$). Note that $P_t\ra P_m$ for $1\leq t <s_0 \leq m$ are all the indecomposable modules which not killed by $j_!^{s_0}j^*_{s_0}$ and are sent to $P_{s_0}$. Therefore the idempotent $r=s_0$ is as desired.

Case (3): Suppose $(m_0,m_1)=(n,0)$ and $P_n\not\in\mathcal{S}$. Let
\begin{equation}\nonumber
p_0:={\rm max}\left\{k:P_k\in\mathcal{S}\right\}\ \ {\rm and}\ \
p_1:={\rm min}\left\{l:P_l\rightarrow P_n\in\mathcal{S}\right\}
\end{equation}
and note that for all $l\geq p_1$, $P_l\rightarrow
P_k\in\mathcal{S}$. This implies that $p_0< p_1$ as otherwise the
triangle
\begin{equation}\nonumber
P_{p_0}\longrightarrow P_n \longrightarrow (P_{p_0}\rightarrow P_n)
\longrightarrow P_{p_0}[1]
\end{equation}
would imply $P_n\in\mathcal{S}$. Choose $r=p_0+1$. If we show that
$\mathcal{S}\subset$ Im$(i_*^r)$, then clearly $\mathcal{S}\subset$
Ker$(j^*_r)$ and thus we get the compatibility condition as for case (1). Now,
$P_l\rightarrow P_k\in$ Im$(i_*^r)$ if and only if $l\geq r$ or $k<
r$. Suppose $P_l\rightarrow P_k\in\mathcal{S}$ such that $0<l<r$ and
$k\geq r$.
%If $p_1\leq k$ then we have an extension between $P_l\rightarrow P_k$ and $P_{p_1}\rightarrow P_n$ given by the triangle:
%\begin{equation}\nonumber
%(P_l\rightarrow P_k)\longrightarrow (P_l\rightarrow P_n)\oplus (P_{p_1}\rightarrow P_k) \longrightarrow (P_{p_1}\rightarrow P_n)\longrightarrow (P_l\rightarrow P_k)[1]
%\end{equation}
%which then implies that $P_l\rightarrow P_n\in\mathcal{S}$ which contradicts the minimality of $p_1$. So $p_1>k$. But then
As above, since $1\leq l\leq p_0\leq k-1$, there is a triangle
\begin{equation}\nonumber
P_{p_0}\longrightarrow P_k\oplus(P_l\rightarrow P_{p_0})
\longrightarrow (P_l\rightarrow P_k)\longrightarrow P_{p_0}[1]
\end{equation}
implying that $P_k\in\mathcal{S}$ - a contradiction with
the choice of $p_0$. Thus $\mathcal{S}\subset$ Im$(i_*^r)$.
\end{proof}

The factors appearing in such recollements $\mathcal{R}_r$ for $\Dcal^b(A_n)$
are either $\mathcal{D}^b(\mathbb{K})$, $\mathcal{D}^b(A_k)$ for
some $k<n$ or $\mathcal{D}^b(A_k\times A_p)$ for some $k$ and $p$ with $k+p<n$.
This theorem shows that we can inductively construct all bounded
t-structures in $\Dcal^b(A_n)$. Furthermore we only need the recollements
coming from these idempotents.

As a corollary one can list all bounded t-structures for $\Dcal^b(A_2)$.
Given the simplicity of computations for $A_2$ we can, indeed, go
one step further and describe all t-structures, without the boundedness constraint.  
Recall that for a triangulated category $\Dcal$ endowed with a recollement $\mathcal{R}$, we write $\mathcal{T}^{\mathcal{R}}_{\mathcal{D}}$ for the set of t-structures in $\Dcal$ which are compatible with the recollement $\mathcal{R}$.
%This is a very particular case of our main result. This approach however has the advantage of, given the simplicity of %computations, explicitly describing all t-structures (not only bounded) in the unbounded derived category $\Dcal(A_2)$ %through elementary methods.

\begin{proposition}
If $\mathcal{D}^{\leq 0}\subset \mathcal{D}^b(A_2)$ is an aisle of a
t-structure in $\Tcal^{\mathcal{R}_1}_{\Dcal^b(A_2)}\cup \Tcal^{\mathcal{R}_2}_{\Dcal^b(A_2)}$, then it
can be described by one of the following types:
\begin{enumerate}
\item It is trivial (i.e., equal to $0$ or to $\mathcal{D}^b(A_2)$);
\item It is a shift of the standard aisle;
\item Its indecomposable objects are (some or all) shifts of $P_1$;
\item Its indecomposable objects are (some or all) shifts of $S_2$;
\item Its indecomposable objects are (some or all) shifts of $P_2$;
\item Its indecomposable objects are the union of the indecomposable objects of a shift of a standard aisle with (some or all) shifts of $P_1$;
\item Its indecomposable objects are the union of the indecomposable objects of a shift of a standard aisle with (some or all) shifts of $S_2$;
\item Its indecomposable objects are the union of the indecomposable objects of a shift of a standard aisle with (some or all) shifts of $P_2$ and exactly one extra shift of $S_2$.
\end{enumerate}
Furthermore these are all the aisles of $\mathcal{D}^b(A_2)$.
\end{proposition}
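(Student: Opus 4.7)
The plan is to parametrize any aisle $\mathcal{D}^{\leq 0}$ of $\mathcal{D}^b(A_2)$ by a triple $(a,b,c)\in(\Z\cup\{\pm\infty\})^3$ recording the minimal integers $n$ for which the three indecomposables $P_1[n]$, $P_2[n]$, $S_2[n]$ respectively lie in $\mathcal{D}^{\leq 0}$, with the conventions $\min\emptyset=+\infty$ and $\min\Z=-\infty$. Since $\mathcal{D}^b(A_2)$ is Krull--Schmidt and the three modules $P_1$, $P_2$, $S_2$ exhaust, up to shift, all its indecomposables, while aisles are closed under $[1]$, this triple completely determines $\mathcal{D}^{\leq 0}$.

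Applying the closure of the aisle under cones and extensions to the Auslander--Reiten triangle $P_1[n]\to P_2[n]\to S_2[n]\to P_1[n+1]$ and its two rotations yields the binding constraints
\begin{equation}\nonumber
\text{(A)}\ b\leq\max(a,c),\qquad \text{(B)}\ c\leq\max(b,a-1),\qquad \text{(C)}\ a\leq\max(b,c+1).
\end{equation}
Since the only non-split indecomposable triangles in $\mathcal{D}^b(A_2)$ are shifts of this single AR-triangle, any triple satisfying (A), (B), (C) describes a genuine aisle, with truncations built inductively from finitely many cones. Using the formulas for $j_!^r j^*_r$ computed in the proof of the preceding theorem, compatibility with $\mathcal{R}_1$ then translates to $a\leq b$ (because $j_!^1 j^*_1 P_2 = P_1$) and compatibility with $\mathcal{R}_2$ translates to $b\leq c$ (because $j_!^2 j^*_2 S_2 = P_2$).

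The decisive step, which at once establishes the ``Furthermore'' assertion, is the observation extracted from (C): if $b>c$ then $\max(b,c+1)=b$, forcing $a\leq b$. Thus every aisle automatically satisfies ``$a\leq b$ or $b\leq c$'' and is compatible with at least one of $\mathcal{R}_1$, $\mathcal{R}_2$. A short case analysis (noting that $a<b$ together with (A), (B) forces $b=c$, and $b<c$ together with (B), (C) forces $a=c+1$) then partitions every valid triple into one of the three families $a\leq b=c$, $a=b\geq c$ or $a=c+1\geq b$. Passing through the extreme values $\pm\infty$, these produce exactly the eight types in the statement, with only harmless overlap at boundary triples such as $(k,k,k-1)$, which fits both descriptions (7) and (8).

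The main obstacle will be the careful bookkeeping at the $\pm\infty$ boundaries: one needs to verify that every degenerate aisle is accounted for exactly once and, conversely, that each of the eight listed types corresponds to a valid triple satisfying (A), (B), (C). This is routine but tedious, and is where errors would most likely arise.
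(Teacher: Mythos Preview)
Your approach is correct and genuinely different from the paper's. The paper proceeds in two disjoint steps: first it exhibits, for each of the eight listed types, explicit induction data $(r;m,n)\in\{1,2\}\times\bar{\Z}\times\bar{\Z}$ realising that aisle as $\mathcal{D}^{\leq 0}_{(r;m,n)}$; second, it runs a direct case analysis on an arbitrary aisle (first by how many of $P_1,P_2,S_2$ occur up to shift, then by which of them lie just beyond the largest standard-aisle shift contained in $\mathcal{D}^{\leq 0}$) to rule out any further shapes. Your parametrisation by the triple $(a,b,c)$ together with the constraints (A)--(C) collapses both steps into one: the ``Furthermore'' claim becomes the single observation from (C) that $b>c$ forces $a\leq b$, and the eight types fall out of the trichotomy $a\leq b=c$, $a=b\geq c$, $a=c+1\geq b$ after scanning the $\pm\infty$ boundaries. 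This is tidier and makes the compatibility criterion for each $\mathcal{R}_r$ completely transparent; the paper's route, on the other hand, keeps the BBD machinery in the foreground and hands you the explicit induction parameters for each type without further work.

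One point in your outline deserves a sentence more than you give it: the claim that every triple satisfying (A)--(C) already defines a genuine aisle. Saying ``the only non-split indecomposable triangles are shifts of the AR-triangle'' is true but not quite enough on its own; you also need that every composite of two of the basic nonzero maps $P_1\to P_2$, $P_2\to S_2$, $S_2\to P_1[1]$ vanishes, so that an arbitrary morphism between objects of the candidate subcategory can be brought to block-diagonal form by base change, reducing extension-closure to the three elementary checks encoded in (A)--(C). Alternatively, you can bypass this entirely by noting that each of the eight listed shapes is visibly an aisle (e.g.\ via the paper's explicit inductions), so that sufficiency of (A)--(C) is never needed in full generality.
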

\begin{proof}
Let $A$ be the path algebra of the quiver $A_2$.
Note that, as rings, $A/Ae_rA\cong\mathbb{K}\cong e_rAe_r$ for $r=1,2$.  By Lemma \ref{div ring} $\mathcal{T}_{\Dcal^b(\mathbb{K})} \cong \bar{\mathbb{Z}}$. We check first that
these t-structures can occur as BBD-induction from pairs. We will use the notation
$\mathcal{D}^{\leq 0}_{(r;m,n)}$ to describe the aisle resulting
from induction of the pair  $(m,n)\in \bar{\mathbb{Z}}\times
\bar{\mathbb{Z}}\cong \mathcal{T}_{\mathcal{D}^b(A/Ae_rA)}\times
\mathcal{T}_{\mathcal{D}^b(e_rAe_r)}$ with respect to the recollement
$\mathcal{R}_r$ ($r=1,2$). The following statements are easy to check using the explicit functors in $\Rcal_r$:
\begin{enumerate}
\item If $\Dcal^{\leq 0}$ is trivial (i.e., equal to $0$ or to $\mathcal{D}^b(A_2)$) then we have that $\Dcal^{\leq 0}$ is equal to either  $\mathcal{D}^{\leq 0}_{(r;-\infty,-\infty)}$ or $\mathcal{D}^{\leq 0}_{(r;+\infty,+\infty)}$;
\item If $\Dcal^{\leq 0}$ is a shift of the standard aisle, it is equal to $\mathcal{D}^{\leq 0}_{(r;n,n)}$;
\item If the indecomposable objects of $\Dcal^{\leq 0}$ are shifts of $P_1$ then we have that $\Dcal^{\leq 0}$ is equal to either $\mathcal{D}^{\leq 0}_{(1;+\infty,-\infty)}=\mathcal{D}^{\leq 0}_{(2;-\infty,+\infty)}$ (if all shifts are there) or $\mathcal{D}^{\leq 0}_{(1;n,-\infty)}=\mathcal{D}^{\leq 0}_{(2;-\infty,n)}$ (if only some shifts are there);
\item If  the indecomposable objects of $\Dcal^{\leq 0}$ are shifts of $S_2$ then we have that $\Dcal^{\leq 0}$ is equal to either $\mathcal{D}^{\leq 0}_{(1;-\infty,+\infty)}$ (if all shifts are there) or $\mathcal{D}^{\leq 0}_{(1;-\infty,n)}$ (if only some shifts are there);
\item If the indecomposable objects of $\Dcal^{\leq 0}$ shifts of $P_2$ then we have that $\Dcal^{\leq 0}$ is equal to either $\mathcal{D}^{\leq 0}_{(2;+\infty,-\infty)}$ (if all shifts are there) or $\mathcal{D}^{\leq 0}_{(2;n,-\infty)}$ (if only some shifts are there);
\item If the indecomposable objects of $\Dcal^{\leq 0}$ are the union of the indecomposable objects of a shift of a standard aisle with shifts of $P_1$ then we have that $\Dcal^{\leq 0}$ is equal to either  $\mathcal{D}^{\leq 0}_{(1;+\infty,n)}=\mathcal{D}^{\leq 0}_{(2;n,+\infty)}$ (if all shifts are there) or $\mathcal{D}^{\leq 0}_{(1;n,m)}=\mathcal{D}^{\leq 0}_{(2;m,n)}$ for $m<n$ (if only some shifts are there);
\item If the indecomposable objects of $\Dcal^{\leq 0}$ are the union of the indecomposable objects of a shift of a standard aisle with shifts of $S_2$ then we have that $\Dcal^{\leq 0}$ is equal to either  $\mathcal{D}^{\leq 0}_{(1;m,+\infty)}$ (if all shifts are there) or $\mathcal{D}^{\leq 0}_{(1;m,n)}$ for $m<n$ (if only some shifts are there);
\item If the indecomposable objects of $\Dcal^{\leq 0}$ are the union of the indecomposable objects of a shift of a standard aisle with shifts of $P_2$ and exactly one extra shift of $S_2$ then we have that $\Dcal^{\leq 0}$ is equal to either  $\mathcal{D}^{\leq 0}_{(2;+\infty,n)}$ (if all shifts are there) or $\mathcal{D}^{\leq 0}_{(2;m,n)}$ for $m>n$ (if only some shifts are there).
\end{enumerate}

We now prove that these are all possible aisles.
Suppose $\mathcal{D}^{\leq 0}$ is neither zero nor the whole
category (i.e. not of type (1)). Then it contains some
indecomposable objects. If the aisle contains shifts of only one
indecomposable module, then all possibilities are listed in the
proposition - they are types (3),(4) and (5). So we suppose there
are at least two indecomposable objects which are shifts of distinct
indecomposable modules. This implies that a standard aisle is contained in $\Dcal^{\leq0}$, since we will have at least one triangle of indecomposable objects in $\Dcal^{\leq0}$ which is then necessarily a shift of
\begin{equation}\nonumber
P_1\longrightarrow P_2 \longrightarrow S_2 \longrightarrow P_1[1].
\end{equation}
We assume, without loss of generality,
that $\mathcal{D}^{\leq -1}_0\subset \mathcal{D}^{\leq 0}$ but
$\mathcal{D}^{\leq 0}_0\not\subset \mathcal{D}^{\leq 0}$. If
$\mathcal{D}^{\leq -1}_0= \mathcal{D}^{\leq 0}$ we are done (type
(2)), otherwise, by considering the triangle above we observe the following:
\begin{itemize}
\item  if $S_2\in \mathcal{D}^{\leq 0}$ then $P_1\not\in \mathcal{D}^{\leq 0}$ (otherwise $S_2\in \mathcal{D}^{\leq 0}$ and thus $\mathcal{D}^{\leq 0}_0= \mathcal{D}^{\leq 0}$). If $P_2\not\in \mathcal{D}^{\leq 0}$ then the only remaining indecomposable objects that can lie in the aisle are the negative shifts of $S_2$ and this is type (7).  If $P_2\in \mathcal{D}^{\leq 0}$ then $S_2[-1]\not\in \mathcal{D}^{\leq 0}$ (otherwise  $P_1\in \mathcal{D}^{\leq 0}$ and thus $\mathcal{D}^{\leq 0}_0= \mathcal{D}^{\leq 0}$) and thus the only remaining indecomposable objects that can lie in the aisle are negative shifts of $P_2$ and this is type (8);
\item if $P_2\in \mathcal{D}^{\leq 0}$, then so is $S_2$ and we fall on the previous case;
\item If $P_1\in \mathcal{D}^{\leq 0}$, then $P_2, S_2\not\in \mathcal{D}^{\leq 0}$ and the only remaining indecomposable objects that can lie in the aisle are the negative shifts of $P_1$ - and this is type (6).
\end{itemize}
This concludes the proof.
\end{proof}

%Observe that the exact same proof works for the analogous statement for the bounded derived category $\Dcal^b(A_2)$.
We end this section by two clarifying remarks making use of the explicit simple nature of $\Dcal^b(A_2)$.

\begin{remark}
The proof above shows how different recollements allow different types of
induced t-structures. While $\mathcal{R}_1$ and $\mathcal{R}_2$ both
allow types (1), (2), (3), and (6) to appear as BBD-induction, only
$\mathcal{R}_1$ induces types (4) and (7) and only $\mathcal{R}_2$
induces types (5) and (8).
\end{remark}

\begin{remark}
Also it is not hard to check the equivalence classes for bounded t-structures in $\mathcal{D}^b(A_2)$. Indeed, by the work of Miyachi and Yekutieli (\cite{MY}), the group of autoequivalences of $A_2$ is known to be generated by the Auslander-Reiten translation $\tau$ and the triangulated shift $[1]$. By explicitly computing the orbits of a bounded t-structure in the list above, one can easily see that the equivalence class of a bounded t-structure $(\mathcal{D}^{\leq 0}, \mathcal{D}^{\geq 0})$ is determined by the number of connected components of the intersection of the Auslander-Reiten quiver of $\mathcal{D}^b(A_2)$ with $\mathcal{D}^{\leq 0}$. Therefore, the set of equivalence classes of bounded t-structures for  $\mathcal{D}^b(A_2)$ is naturally parametrized by $\mathbb{N}$.
\end{remark}

\end{section}

\begin{section}{Bounded t-structures for piecewise hereditary algebras}

In this section we generalize our result of the previous section to
piecewise hereditary algebras, and prove that any bounded t-structure with length heart is a BBD-induction with respect to some recollement by derived categories. Moreover when the algebra is hereditary of finite representation type, we prove that any bounded t-structure has a length heart, and hence it is always a BBD-induction. Indeed, since we are only interested in recollements by derived categories (and not general triangulated categories, see \cite{ALK2} for a discussion), by the expression \textit{a t-structure is BBD-induced} we will mean \textit{a t-structure is BBD-induced with respect to some recollement by derived categories}.

Let $A$ be a finite dimensional $\mathbb{K}$-algebra. Recall that
$A$ is {\em piecewise hereditary}, if there exists a hereditary and
abelian $\K$-category $\mathcal{H}$ such that the bounded derived
categories $\Dcal^b(A)$ and $D^b(\mathcal{H})$ are triangule
equivalent. In other words, there exists a tilting complex $T$ in
$D^b(\mathcal{H})$ with endomorphism ring being $A$. In particular,
the derived category $\Dcal^b(A)$ is hereditary, i.e., each
indecomposable object is a stalk complex concentrated in one
component, or equivalently a shift of an indecomposable object in
$\mathcal{H}$.

Given a family of simple-minded objects $X_1, \ldots, X_n$ in
$\Dcal^b(A)$, by Subsection \ref{simple-minded}, their extension
closure is a module category of a certain finite dimensional algebra
$\Gamma$. In the piecewise hereditary case, this algebra $\Gamma$ is
even directed, in the sense that the quiver of $\Gamma$ has no
oriented cycles, or equivalently, there are no cycles
\begin{equation}\nonumber
P_1\ra P_2\ra \ldots \ra P_s = P_1
\end{equation}
of nonzero homomorphisms (which are not isomorphisms) between projective indecomposable $\Gamma$-modules.
This follows from Happel \cite[Lemma IV.1.10]{H}. For the
convenience of the reader we include a proof here.

\begin{lemma}[\cite{H} Lemma IV.1.10] \label{directness} 
Let $A$ be a piecewise hereditary algebra over $\K$, and $X_1,
\ldots, X_n$ a family of simple-minded objects in $\Dcal^b(A)$. Then
the extension closure of $X_i$'s in $\Dcal^b(A)$ is equivalent to
$\fdmod(\Gamma)$ for some finite dimensional directed algebra
$\Gamma$. In particular, $X_i$ is exceptional (i.e. has no
self-extensions).
\end{lemma}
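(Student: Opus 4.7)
The assertion that the extension closure is equivalent to $\fdmod(\Gamma)$ for some finite dimensional algebra $\Gamma$ follows directly from Theorem \ref{KY-thm}; under the piecewise hereditary hypothesis what remains is to prove that $\Gamma$ is directed and that each $X_i$ is exceptional.

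The plan is to build a tilting object in $\Dcal^b(\Hcal)$ associated to $\{X_1,\ldots,X_n\}$ and then invoke \cite[Lemma IV.1.10]{H}. First I would fix a triangle equivalence $\Dcal^b(A)\simeq\Dcal^b(\Hcal)$ with $\Hcal$ hereditary abelian, as provided by the hypothesis. Every indecomposable object of $\Dcal^b(\Hcal)$ is a stalk complex, and each $X_i$ is indecomposable with local endomorphism ring (since $\End(X_i)=\K$ by condition (2) of the simple-minded definition), so $X_i\cong Y_i[k_i]$ for some indecomposable $Y_i\in\Hcal$ and integer $k_i$.

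Next I would apply the Koenig-Yang machinery (\cite{KY}) to associate to the simple-minded collection a silting object $T=\bigoplus_{i=1}^n T_i$ in $\Dcal^b(A)$ whose indecomposable summands correspond, under $\Acal\cong\fdmod(\Gamma)$, to the indecomposable projective $\Gamma$-modules $P_i$, and whose endomorphism algebra is identified with $\Gamma$. In $\Dcal^b(\Hcal)$ each $T_i$ is a stalk complex, and the silting condition together with the hereditariness of $\Hcal$ ensures (possibly after a global shift) that $T$ is a tilting object of $\Dcal^b(\Hcal)$ with the same endomorphism algebra. Happel's Lemma IV.1.10 then forbids cycles of nonzero non-isomorphism maps between the indecomposable summands of a tilting object in the derived category of a hereditary category; transferred via $\Acal\cong\fdmod(\Gamma)$, this is precisely the absence of cycles $P_1\to P_2\to\ldots\to P_s=P_1$ of nonzero non-isomorphisms between indecomposable projective $\Gamma$-modules. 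Hence $\Gamma$ is directed.

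The exceptionality of the $X_i$ is then short: directedness forbids loops in the Gabriel quiver of $\Gamma$, so $\Ext^1_\Gamma(S_i,S_i)=0$. Via Lemma \ref{extension-in-heart} and the equivalence $\Acal\cong\fdmod(\Gamma)$ this transfers to $\Hom_{\Dcal^b(A)}(X_i,X_i[1])=\Ext^1_\Hcal(Y_i,Y_i)=0$, and combining with $\Ext^m_\Hcal(Y_i,Y_i)=0$ for $m\geq 2$ (hereditariness) and $\Hom(X_i,X_i[m])=0$ for $m<0$ (condition (1) of simple-minded) we conclude that $X_i$ has no self-extensions in any nonzero degree. The main obstacle is the middle step: passing cleanly from the simple-minded collection to a tilting object in $\Dcal^b(\Hcal)$ whose endomorphism algebra is exactly $\Gamma$. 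An alternative that avoids abstract silting theory is to take $T$ to be the projective generator of $\Acal$ viewed inside $\Dcal^b(\Hcal)$: each summand is then automatically a stalk complex (local endomorphism ring), and the tilting axioms $\Hom_{\Dcal^b(\Hcal)}(T,T[k])=0$ for $k\neq 0$ can be verified directly using Lemma \ref{extension-in-heart}, hereditariness, and the fact that $\Acal$ is the heart of a t-structure in $\Dcal^b(\Hcal)$.
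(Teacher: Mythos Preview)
Your strategy of invoking Happel's Lemma IV.1.10 as a black box requires producing a tilting object in $\Dcal^b(\Hcal)$ with endomorphism algebra exactly $\Gamma$, and this is where the argument breaks. Taking $T$ to be the projective generator of the heart $\Acal$, you correctly obtain $\End_{\Dcal}(T)\cong\Gamma$, $\Hom_{\Dcal}(T,T[1])=0$ (via Lemma \ref{extension-in-heart}), and $\Hom_{\Dcal}(T,T[k])=0$ for $k<0$ (from the t-structure axioms). But the vanishing for $k\geq 2$ does \emph{not} follow from the tools you list: Lemma \ref{extension-in-heart} only controls degree~$1$, and if the indecomposable summands $T_i\cong M_i[l_i]$ sit in different degrees with $l_i-l_j\geq 2$, then $\Hom_{\Dcal}(T_i,T_j[l_i-l_j])=\Hom_{\Hcal}(M_i,M_j)$ may well be nonzero. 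The Koenig--Yang silting object does satisfy the positive-degree vanishing by definition, but its endomorphism algebra is not $\Gamma$ on the nose (only derived equivalent to it), and identifying the two amounts to precisely the control over higher extensions that you are trying to establish. So both variants of your middle step are circular or incomplete.

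The paper's proof sidesteps this by \emph{not} assembling a tilting object at all. It works directly with a hypothetical cycle $P_1\to P_2\to\cdots\to P_s=P_1$ of indecomposable projective $\Gamma$-modules, viewed inside $\Dcal^b(\Hcal)$: the existence of nonzero maps between consecutive terms forces all the $P_i$ in the cycle into a single shift $\Hcal[k]$, and then only the vanishing of $\Ext^1_{\Hcal}(P_i,P_j)$ is needed, which \emph{does} follow from Lemma \ref{extension-in-heart} and projectivity in $\fdmod(\Gamma)$. The contradiction comes from a structural fact about $\Hcal$ (using that $\Hcal$ may be taken to be $\fdmod(H)$ or $\coh(X)$): a nonzero map between indecomposables with vanishing $\Ext^1$ in the reverse direction is either a monomorphism or an epimorphism, and combined with $\End(P_1)$ being a division ring this forces all maps in the cycle to be isomorphisms. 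Thus the paper needs only the ``partial tilting'' information that survives Lemma \ref{extension-in-heart}, whereas your route demands the full tilting condition, which is genuinely stronger and not available from the hypotheses. Your exceptionality argument, by contrast, matches the paper's once directedness is in hand.
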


\begin{proof} Let $\Hcal$ be an abelian and hereditary category $\mathcal{H}$
with $\Dcal^b(A) \cong \Dcal^b(\mathcal{H})$. By Happel and Reiten
\cite{HR} and Lenzing \cite{L}, $\mathcal{H}$ can be chosen to be
either the module category $\fdmod(H)$ of some finite dimensional
hereditary algebra $H$, or $\coh(X)$ the category of coherent
sheaves over some exceptional curve $X$. In these two cases, given
indecomposable objects $X,Y\in\Hcal$ with $\Ext_{\Hcal}^1(Y,X)=0$,
then a nonzero homomorphism from $X$ to $Y$ is either a monomorphism
or an epimorphism (\cite[4.1]{HRi}). In particular if $X$ has no
self-extensions then the endomorphism ring $\End_{\Hcal}(X)$ is a
division ring.

Assume now $P_ 1\ra P_2 \ra \ldots \ra P_s = P_1$ is a cycle of
indecomposable projective $\Gamma$-modules. View this as a cycle in
the derived category $\Dcal^b(A) \cong\Dcal^b(\Hcal)$. Because
$\Hcal$ is hereditary, there exists an integer $k$ such that all
$P_i$'s belong to the $k$-th shift $\mathcal{H}[k]$
(\cite[I.5.3]{H}). Without loss of generality assume $k=0$. For any
$i,j =1,\ldots, s$, by Lemma \ref{extension-in-heart},
$0=\Ext^1_{\Gamma}(P_i,P_j) = \Hom_{\Dcal^b(\Hcal)}(P_i,P_j[1])$,
which is isomorphic to $\Ext^1_{\Hcal}(P_i,P_j)$. If follows that
every map occuring in the cycle is either a monomorphism or
epimorphism. Indeed, either all maps are monomorphisms or all maps
are epimorphisms (otherwise we would get a nonzero map which is a composition of a
proper monomorphism with a proper epimorphism but
neither a monomorphism nor an epimorphism). Since $\End(P_1)$ is a division ring, all maps
must be isomorphisms, contradiction to the definition of a  cycle.

Finally because of the directedness of $\Gamma$ each
$X_i$ has no self-extensions as a $\Gamma$-module. In particular by Lemma
\ref{extension-in-heart}, $\Hom_{\Dcal^b(A)}(X_i,X_i[1]) =
\Ext^1_{\Gamma}(X_i,X_i) =0$. Since $\Dcal^b(A) \cong
\Dcal^b(\Hcal)$ is hereditary, no higher self-extensions are
possible.
\end{proof}

Before we present our main result, we need one further result, showing how to construct a recollement of a piecewise hereditary algebra from an indecomposable and {\em exceptional} object (i.e., an object without self-extensions). 

\begin{proposition}[\cite{ALK2} Theorem 2.5; \cite{AKL3} Proposition 5.8] \label{construct-recollement}
Let $A$ be a piecewise hereditary algebra over a field $\K$. For any indecomposable and exceptional object $X$ in $\Dcal^b(A)$, there exists a recollement of the form
\begin{equation}\nonumber
\begin{xymatrix}{\mathcal{D}^b(B)\ar[r]^{}&\mathcal{D}^b(A)\ar@<1.5ex>[l]_{}\ar@<-1.5ex>[l]_{}\ar[r]^{}&
\mathcal{D}^b(C)\ar@<1.5ex>_{}[l]\ar@<-1.5ex>_{}[l]}.
\end{xymatrix}
\end{equation}
where $B$ and $C$ are again piecewise hereditary algebras with $C=\End_A(X)$ being a division ring over $\K$.
\end{proposition}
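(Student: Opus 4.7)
The plan is to reduce to the hereditary case and then construct the recollement via Geigle--Lenzing style perpendicular categories.

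First, since $A$ is piecewise hereditary, fix a triangle equivalence $\Dcal^b(A)\cong \Dcal^b(\Hcal)$ with $\Hcal$ hereditary abelian. By Happel--Reiten--Lenzing (as already invoked in the proof of Lemma \ref{directness}), $\Hcal$ is either $\fdmod(H)$ for a finite dimensional hereditary $\K$-algebra $H$, or $\coh(\mathbb{X})$ for an exceptional curve $\mathbb{X}$. Because $\Hcal$ is hereditary, every indecomposable object of $\Dcal^b(\Hcal)$ is a shift of an indecomposable object of $\Hcal$. Combining this with Lemma \ref{newreco}, we may assume without loss of generality that $X\in\Hcal$. The argument already given in the proof of Lemma \ref{directness} (nonzero endomorphisms of an exceptional indecomposable in $\Hcal$ are mono or epi, and cycles forbid anything short of isomorphism) then shows that $C=\End_A(X)$ is a division ring.

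Next, form the right perpendicular subcategory
\[
X^\perp=\{Y\in\Hcal : \Hom_{\Hcal}(X,Y)=0=\Ext^1_{\Hcal}(X,Y)\}.
\]
The key input is the perpendicular calculus of Geigle--Lenzing: $X^\perp$ is an extension-closed exact abelian subcategory of $\Hcal$ which is itself hereditary and of the same type (a finite dimensional hereditary module category, or coherent sheaves on another exceptional curve). Moreover, because $\End_{\Hcal}(X)$ is a division ring, every object $Y\in\Hcal$ fits into a short \emph{canonical} sequence involving finitely many copies of $X$ that exhibits the inclusion $X^\perp\hookrightarrow \Hcal$ as having both a left and a right adjoint. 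I would extract a tilting object in $X^\perp$ to define $B$ as its endomorphism algebra; then $B$ is piecewise hereditary by construction and $\Dcal^b(X^\perp)\cong\Dcal^b(B)$.

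Passing to bounded derived categories, the inclusion lifts to a fully faithful triangle functor $i_*\colon\Dcal^b(B)\hookrightarrow\Dcal^b(\Hcal)\cong\Dcal^b(A)$ with left and right adjoints $i^*,i^!$ obtained by deriving the perpendicular projections. For the right-hand side, the derived functor $j^*:=R\Hom_A(X,-)$ kills $i_*\Dcal^b(B)$, takes values in $\Dcal^b(C)$, and admits the fully faithful left adjoint $j_!:=-\otimes^L_C X$ and right adjoint $j_*$. Verifying the recollement axioms then reduces to checking that the canonical triangle
\[
j_!j^*Z \longrightarrow Z \longrightarrow i_*i^*Z \longrightarrow j_!j^*Z[1]
\]
exists for every $Z\in\Dcal^b(\Hcal)$, and this is precisely the derived version of the Geigle--Lenzing canonical sequence applied componentwise.

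The main obstacle is the Geigle--Lenzing step: ensuring that the derived perpendicular of an exceptional indecomposable is again equivalent to $\Dcal^b(B)$ for a \emph{piecewise hereditary} algebra $B$, particularly in the $\coh(\mathbb{X})$ case where one has to argue that the perpendicular is still of tractable hereditary type and admits a tilting object. Once this structural input is secured, the six adjunctions, the orthogonality $i^!j_*=0$, and the gluing triangles all come out formally from the perpendicular decomposition together with the fact that $C$ is a division ring.
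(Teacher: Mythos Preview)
The paper does not prove this proposition itself; it merely records the Happel--Reiten dichotomy (piecewise hereditary $\Rightarrow$ derived equivalent to a hereditary algebra or a canonical algebra) and defers the two cases to \cite[Theorem 2.5]{ALK2} and \cite[Proposition 5.8]{AKL3} respectively. Your sketch is essentially the argument those references carry out: reduce to $X\in\Hcal$ using heredity and a shift, invoke Geigle--Lenzing perpendicular calculus to produce $X^\perp$ with the required adjoints, and derive the resulting decomposition into a recollement with $j_!$ built from $X$ and $i_*$ the inclusion of the perpendicular. So your approach matches the intended one.

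Two small points. First, your claim that $X^\perp$ is ``of the same type'' is not quite accurate: in the $\coh(\mathbb{X})$ case the perpendicular of an exceptional object can drop to a hereditary module category rather than another exceptional curve (already for a line bundle on $\mathbb{P}^1$). This does no harm, since such a category is certainly piecewise hereditary, but the dichotomy is not preserved. Second, the obstacle you flag at the end---that $X^\perp$ admits a tilting object so that $\Dcal^b(X^\perp)\cong\Dcal^b(B)$ with $B$ finite dimensional and piecewise hereditary---is exactly the nontrivial content supplied by \cite{AKL3} in the canonical algebra case; in the $\fdmod(H)$ case it is classical Geigle--Lenzing. So your reservation is well placed, and the paper resolves it by citation rather than by argument.
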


By \cite{HR}, a quasi-hereditary algebra is derived equivalent to either a hereditary algebra or a canonical algebra. These two cases are discussed by \cite[Theorem 2.5]{ALK2} and \cite[Proposition 5.8]{AKL3} respectively. We can now prove our main result.

\begin{theorem} \label{piecewisehereditary}
Let $A$ be a piecewise hereditary algebra over a field $\K$ and
suppose $T=(\mathcal{D}^{\leq 0},\mathcal{D}^{\geq 0})$ is a bounded
t-structure in $\mathcal{D}^b(A)$ whose heart is a length category.
Then $T$ is compatible with a recollement of the
form
\begin{equation}\nonumber
\begin{xymatrix}{\mathcal{D}^b(B)\ar[r]^{}&\mathcal{D}^b(A)\ar@<1.5ex>[l]_{}\ar@<-1.5ex>[l]_{}\ar[r]^{}&
\mathcal{D}^b(C)\ar@<1.5ex>_{}[l]\ar@<-1.5ex>_{}[l]}.
\end{xymatrix}
\end{equation}
(and hence a BBD-induction with respect to it) for some piecewise hereditary algebras $B$ and $C$. Moreover, $C$ can always be taken to be a division ring over the base field $\K$.
\end{theorem}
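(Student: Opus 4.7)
My plan is to pick a single simple-minded object $X$ to which Proposition~\ref{construct-recollement} can be applied, and then to verify the compatibility condition of Proposition~\ref{BBD compat}. By Theorem~\ref{KY-thm}, the bounded t-structure $T$ with length heart $\Acal$ corresponds to a simple-minded family $X_1,\ldots,X_n\in\Dcal^b(A)$ whose extension closure is $\Acal$ and whose endomorphism algebra $\Gamma$ is, by Lemma~\ref{directness}, a directed algebra with each $X_i$ exceptional.

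I would fix an equivalence $\Dcal^b(A)\cong\Dcal^b(\Hcal)$ with $\Hcal$ hereditary and write $X_i\cong X_i'[-b_i]$ with $X_i'\in\Hcal$ indecomposable. Setting $a:=\max\{b_1,\ldots,b_n\}$, and noting that any subquiver of an acyclic quiver is again acyclic, I can choose $X:=X_k$ with $b_k=a$ that is a sink in the subquiver of the Gabriel quiver of $\Gamma$ spanned by $\{i:b_i=a\}$. Proposition~\ref{construct-recollement} then produces a recollement
\[\Dcal^b(B)\longrightarrow\Dcal^b(A)\longrightarrow\Dcal^b(C)\]
with $C=\End_A(X)=\K$ (since $X$ is simple-minded) and $B$ piecewise hereditary.

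The crucial computation---enabled by the piecewise hereditary assumption---will be the identity
\[\Hom_A(X,X_j[i])=\Hom_\Hcal(X_k',X_j'[\,i+a-b_j\,]),\]
which vanishes unless $i+a-b_j\in\{0,1\}$ and even then reduces to a $\Hom$ or $\Ext^1$ in $\Hcal$. Vanishing for $i<0$ is simple-mindedness; vanishing for $i\geq 2$ follows because $b_j\leq a$ forces $i+a-b_j\geq 2$ and $\Hcal$ is hereditary; vanishing for $i=1$ reduces to $\Ext^1_\Hcal(X_k',X_j')=0$ for $b_j=a$, which is precisely the sink condition we built in (with $j=k$ handled by the exceptionality of $X_k$). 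Consequently, $j^*(X_j)$ has $i$-th cohomology $\Hom_A(X,X_j[i])$, which is zero for $i\neq 0$ and equals $\delta_{jk}\cdot C$ for $i=0$, so $j^*(X_j)$ is concentrated in degree $0$ and $j_!j^*(X_j)$ is either $0$ or $X$, both lying in $\Dcal^{\leq 0}$. Since $\Dcal^{\leq 0}$ is the extension closure of $\{X_j[m]:1\leq j\leq n,\ m\geq 0\}$ and $j_!j^*$ is exact and commutes with shifts, the containment $j_!j^*\Dcal^{\leq 0}\subset\Dcal^{\leq 0}$ follows, so Proposition~\ref{BBD compat} delivers the theorem.

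The main obstacle will be arranging that the chosen $X$ simultaneously kills higher derived $\Hom$'s into every other $X_j$ (achieved by maximality of its degree $a$ in $\Hcal$) and the remaining $\Ext^1$'s (achieved by the sink condition within the position-$a$ subquiver). Both conditions rely crucially on the directedness of $\Gamma$ supplied by Lemma~\ref{directness}; without it a sink need not exist and the argument would break down.
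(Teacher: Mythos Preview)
Your proposal is correct and follows essentially the same route as the paper: select a simple-minded object $X_k$ that is right-perpendicular to all the others (the paper phrases this as choosing a simple projective $\Gamma$-module with minimal cohomological degree $l_1$, while you equivalently take maximal $b_k$ and then a sink in the corresponding subquiver of the Gabriel quiver), build the recollement of Proposition~\ref{construct-recollement} from $X_k$, and check compatibility by showing that $j^*$ annihilates the remaining $X_j$ so that $j_!j^*\Dcal^{\leq 0}\subset\Dcal^{\leq 0}$. The only cosmetic difference is in the order of the two selection criteria, and your degree computation $\Hom_A(X,X_j[i])=\Hom_{\Hcal}(X_k',X_j'[i+a-b_j])$ is exactly the paper's computation $\Ext^{l_i-l_1+k}_{\Hcal}(M_1,M_i)$ in different notation.
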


\begin{proof} Write $\Dcal=\Dcal^b(A)$ for short. Let $T$ be a bounded t-structure in $\Dcal$ with heart $\Acal$ being a length category.
Take a set of simple objects $X_1, \ldots, X_n$ in $\Acal$. By
Theorem \ref{bijection}, there is a
finite dimensional algebra $\Gamma$ such that $\Acal \cong
\fdmod(\Gamma)$ and $X_1,\ldots,X_n$ are simple $\Gamma$-modules. Since $A$ is piecewise hereditary, Lemma \ref{directness} shows that, moreover, $\Gamma$ is directed.
Let $\Hcal$ be an abelian and hereditary category with
$\Dcal^b(\Hcal) \cong \Dcal$. There exists integer $l_i\in\Z$ such
that $X_i \in \Hcal[l_i]$ the $l_i$-th shift (for $i =1,\ldots,n$).
Because of the directness of $\Gamma$, we can assume without loss of
generality that $X_1$ is the simple projective $\Gamma$-module with
$l_1$ smallest among simple projective $\Gamma$-modules. We claim
that $l_i\geq l_1$ for all $2\leq i \leq n$. %namely $X_i$ belongs either to $\Hcal[l_1]$ or to the right of $\Hcal[l_1]$.
When $X_i$ is again simple projective as $\Gamma$-module, because of the choice
of $l_1$ it holds $l_i\geq l_1$. When $X_i$ is not projective, there
must exists some $X_j$, simple projective as $\Gamma$-module, and a
path in $\fdmod(\Gamma)$ from $X_j$ to $X_i$. View this path in the
derived category $\Dcal^b(\Hcal)$ and we see that $l_i\geq l_j$.
Also $l_j\geq l_1$ because of the choice of $l_1$. Hence $l_i\geq
l_1$ for all $2\leq i\leq n$.

Next we show that $X_2,\ldots,X_n$ belong to the right perpendicular
category of $X_1$, that is, $\Hom_{\Dcal}(X_1,X_i[k])=0$ for $2\leq
i\leq n$ and for all integers $k\in\Z$. According to the definition
of simple-minded objects, the Hom-set vanishes whenever $k\leq 0$.
We only have to concern with the case $k>0$. Let $M_i$ be the
indecomposable objects in $\Hcal$ with $X_i = M_i[l_i]$. We have
\begin{eqnarray*}
\Hom_{\Dcal}(X_1,X_i[k]) &=& \Hom_{\Dcal^b(\Hcal)}(M_1[l_1], M_i[l_i+k])\\
&=&\Hom_{\Dcal^b(\Hcal)}(M_1,M_i[l_i-l_1+k])\\
&=&\Ext^{l_i-l_1+k}_{\Hcal}(M_1,M_i),
\end{eqnarray*}
which is zero whenever $l_i-l_1+k \neq 0, 1$. Under the condition
that $l_i\geq l_1$ and $k>0$, the only possible nontrivial case is
$l_i=l_1$ and $k=1$. In this case $\Hom_{\Dcal}(X_1,X_i[1]) =
\Ext^1_{\Gamma}(X_1,X_i)$, by Lemma \ref{extension-in-heart}, which
is zero because $X_1$ is a projective $\Gamma$-module.

Note that $X_1$ is indecomposable and exceptional in $\Dcal^b(A)$.
By Proposition \ref{construct-recollement}, there exists a recollement of
$\Dcal^b(A)$ generated by $X_1$
\begin{equation}\nonumber
\begin{xymatrix}{\mathcal{D}^b(B)\ar[r]^{\ i_*}&\mathcal{D}^b(A)\ar@<3ex>[l]_{\ i^!}\ar@<-3ex>[l]_{\ i^* }\ar[r]^{j^*}&\mathcal{D}^b(C)\ar@<3ex>_{j_*}[l]\ar@<-3ex>_{j_!}[l]}.
\end{xymatrix}
\end{equation}
where $B$ and $C$ are again piecewise hereditary algebras. Indeed
$C=\End_{\Dcal^b(A)}(X_1)$ is a division ring, $\Img(j_!) =
\tria(X_1)=\{X_1[k]:k\in\Z\}$ and $\Img(i_*)=X_1^\perp
:=\{Y\in\Dcal^b(A): \Hom_{\Dcal^b(A)}(X_1,Y[k])=0,\ \forall\
k\in\Z\}$. We will show that the t-structure $T$ is compatible with
this recollement, i.e., that $j_!j^*(\Dcal^{\leq 0}) \subset \Dcal^{\leq
0}$. Corollary \ref{Ind-Res} shows that $T$ is then a BBD-induction.

Since $X_1,\ldots,X_n$ is the set of simple-minded objects
corresponding to $T$, the aisle $\Dcal^{\leq 0}$ is just the
extension closure of non-negative shifts of $X_i$'s. We have showed
that $X_2,\ldots,X_n$ are right perpendicular to $X_1$, and hence
belong to $\Img(i_*)$. They are sent to zero by $j^*$. It follows
that the indecomposable objects of $j_!j^*(\Dcal^{\leq 0})$ are
exactly $X_1[k]$ for $k\geq 0$, which form a subset of the aisle
$\Dcal^{\leq 0}$.
This finishes our proof.
\end{proof}

Combined with Proposition \ref{lengthheart-ind} we obtain the following immediate
following.

\begin{corollary}\label{cor1} Let $A$ be a piecewise hereditary algebra over a
field $\K$, and $T=(\Dcal^{\leq 0},\Dcal^{\geq 0})$ a bounded
t-structure in $\Dcal^b(A)$. Then the heart of $T$ is a length
category if and only if $T$ is BBD-induced from t-structures with
length hearts.
\end{corollary}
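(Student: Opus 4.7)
The plan is to obtain this as an immediate combination of Theorem \ref{piecewisehereditary} with the biconditional in Proposition \ref{lengthheart-ind}, using Corollary \ref{Ind-Res} to identify $T$ with the induction of its restriction.

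For the forward implication, suppose the heart $\Acal$ of $T$ is a length category. Then Theorem \ref{piecewisehereditary} applies and produces a recollement $\Rcal$ of $\Dcal^b(A)$ by derived categories $\Dcal^b(B)$ and $\Dcal^b(C)$ of piecewise hereditary algebras, with respect to which $T$ is compatible. By Corollary \ref{Ind-Res}, setting $(T_\Xcal,T_\Ycal):=\mathrm{Res}(T)$, we have $T=\mathrm{Ind}(T_\Xcal,T_\Ycal)$. The ``only if'' part of Proposition \ref{lengthheart-ind} then forces the hearts $\Acal_\Xcal$ and $\Acal_\Ycal$ of $T_\Xcal$ and $T_\Ycal$ to be length categories, exhibiting $T$ as BBD-induced from t-structures with length hearts.

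For the backward implication, suppose $T=\mathrm{Ind}(T_\Xcal,T_\Ycal)$ with respect to some recollement of $\Dcal^b(A)$ by derived categories of algebras, where $T_\Xcal$ and $T_\Ycal$ have length hearts. The ``if'' part of Proposition \ref{lengthheart-ind} then yields that the heart of $T$ is a length category.

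There is essentially no obstacle beyond verifying that the two cited results apply directly; all the real work has been done in establishing Theorem \ref{piecewisehereditary} and Proposition \ref{lengthheart-ind}. The only minor point to check is the bookkeeping that the restriction produced by Corollary \ref{Ind-Res} consists of bounded t-structures on $\Dcal^b(B)$ and $\Dcal^b(C)$ (so that ``length heart'' is the relevant finiteness condition on both sides); this is ensured by Lemma \ref{Ind-res bdd}.
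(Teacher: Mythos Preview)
Your proposal is correct and matches the paper's approach exactly: the paper states this corollary as an immediate consequence of combining Theorem \ref{piecewisehereditary} with Proposition \ref{lengthheart-ind}, and gives no further argument. Your explicit invocation of Corollary \ref{Ind-Res} and Lemma \ref{Ind-res bdd} merely spells out the bookkeeping that the paper leaves implicit.
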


\begin{remark}
In \cite{AKL3} a Jordan H\"older theorem for derived categories of piecewise hereditary algebras is proven. If $A$ is piecewise hereditary over a field $\K$, the derived-simple factors of $\Dcal^b(A)$ are of the form $\Dcal^b(\D)$ where $\D$ is a division ring over $\K$. By Lemma \ref{div ring} every bounded t-structure of $\Dcal^b(\D)$ has a length heart. Hence the bounded t-structures of $\Dcal^b(A)$ with length hearts are precisely the ones obtained by BBD-induction.
%If $A$ is hereditary over a field $\mathbb{K}$ then it is easy to see that its derived-simple factors are all equivalent to $D^b(\mathbb{K}$ and thus all the BBD-induced hearts are length.
\end{remark}

\begin{example}\label{coh}
Let $A$ be the path algebra of the Kroenecker quiver $\mathbb{C}$. %over any field?
Then it is well known that  $\Dcal^b(A)\cong \Dcal^b(\text{coh}(\mathbb{P}^1))$ and therefore coh$(\mathbb{P}^1)$ is the heart of a bounded t-structure in $\Dcal^b(A)$. It is not, however, a length heart. Then, since $A$ is hereditary, Corollary \ref{cor1} shows that this bounded t-structure cannot be induced with respect to any recollement by derived categories.
\end{example}

For the rest of the section, we consider hereditary algebras of finite representation type.

\begin{lemma} Let $A$ be a hereditary algebra of
finite representation type, and $T$ any bounded
t-structure in $\Dcal^b(A)$. Then the heart of $T$ is a length category.
\label{lengthheart}
\end{lemma}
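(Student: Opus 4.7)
The plan is to show that the heart $\Acal$ of $T$ is a Hom-finite, Krull-Schmidt, $\K$-linear abelian category with only finitely many isomorphism classes of indecomposable objects, and then to conclude that such a category is automatically of finite length. Since $A$ is finite-dimensional, $\Dcal^b(A)$ is Hom-finite and Krull-Schmidt, and these properties pass to $\Acal$ as a full additive subcategory closed under direct summands. Moreover, since $A$ is hereditary, by \cite{H} every indecomposable of $\Dcal^b(A)$ is a stalk complex $M[k]$ for some indecomposable $A$-module $M$ and integer $k$, and finite representation type of $A$ ensures that there are only finitely many such $M$ up to isomorphism.

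Next I would use boundedness (equivalently, nondegeneracy together with bounded cohomology) to characterise $\Acal = \{X \in \Dcal^b(A) : H^i(X)=0 \text{ for all } i \neq 0\}$, where $H^i$ denotes the $T$-cohomology functors. Using the formula $H^n=[-n]H^0[n]$, a stalk $M[k]$ belongs to $\Acal$ if and only if the $T$-cohomology of $M$ is concentrated in a single degree, in which case that degree determines $k$ uniquely. Hence the number of isoclasses of indecomposable objects in $\Acal$ is bounded by the (finite) number of indecomposable $A$-modules. Moreover, since $A$ is hereditary of Dynkin type, every indecomposable $A$-module is exceptional, so each indecomposable of $\Acal$ is exceptional in $\Dcal^b(A)$, and by the argument used in Lemma \ref{directness} its endomorphism ring is a finite-dimensional division $\K$-algebra.

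Finally I would invoke the classical Auslander-type result that any $\K$-linear, Hom-finite, Krull-Schmidt abelian category with only finitely many isoclasses of indecomposable objects is equivalent to $\fdmod(\Gamma)$ for some finite-dimensional $\K$-algebra $\Gamma$, and hence is of finite length. The main obstacle I anticipate is this last step: justifying it either by citation or via a direct argument. A hands-on approach would use that any nonzero monomorphism $f:Y\to Y$ in $\Acal$ is automatically an isomorphism, since $\End_\Acal(Y)$ is a finite-dimensional $\K$-algebra so $f$ satisfies a polynomial relation that forces a right inverse once $f$ is assumed mono, and combine this with the fact that Hom-finiteness bounds the multiplicities of indecomposable summands appearing in any subobject of a fixed object in $\Acal$, thereby establishing both the ascending and descending chain conditions on subobjects.
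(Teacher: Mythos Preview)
Your proposal is correct and shares with the paper the crucial first step: $\Acal$ has only finitely many indecomposable objects, because each indecomposable $A$-module $M$ can appear as $M[k]$ in $\Acal$ for at most one integer $k$. The paper states this more directly via $\Acal\cap\Acal[k]=0$ for $k\neq 0$, which is slightly cleaner than your cohomology reformulation but equivalent.

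Where you diverge is in deducing finite length from finitely many indecomposables. The paper exploits a feature specific to Dynkin hereditary algebras: there is a total order on indecomposables in $\Dcal^b(A)$ with $\Hom(X,Y)=0$ whenever $X>Y$, so an infinite chain of proper monomorphisms would force a cycle among indecomposables, contradicting this order. Your route instead argues abstractly: Hom-finiteness forces every mono $Y\to Y$ to be an isomorphism and bounds the multiplicity of each indecomposable summand in any subobject of a fixed $X$, hence the isomorphism classes of subobjects of $X$ form a finite set and no two comparable subobjects in a chain can be isomorphic, giving ACC and DCC. This is a valid and strictly more general argument (it uses nothing about directedness), at the cost of being longer; the paper's argument is a one-line appeal to a known structural fact about representation-finite hereditary algebras. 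Your detour through exceptionality and division endomorphism rings is unnecessary for the hands-on argument---finite-dimensionality of $\End(Y)$ is all you use---but it does no harm.
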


\begin{proof} Suppose $T$ is a bounded t-structure in $\Dcal^b(A)$
with heart $\Acal$. If $X$ is an indecomposable object in $\Acal$,
then any nonzero shift $X[k]$ ($k\neq 0$) can not lie in $\Acal$ %,otherwise there would exist negative extensions in the abelian category $\Acal$.
since $\Acal[k]\cap\Acal = 0$ by definition of heart.
But indecomposable objects in $\Dcal^b(A)$ are
stalk complexes and $A$ has finite representation type. Hence
$\Acal$ contains only finitely many indecomposable objects. Assume
there exists some object $X$ in $\Acal$ with infinite length. That
means it admits an infinite sequence of monomorphisms $\ldots
X_{-1} \rightarrow X_0 \rightarrow X_1 \rightarrow \ldots \rightarrow
X$. Write each $X_i$ as a direst sum of indecomposable objects. We
will be able to find a cycle of indecomposable objects. But on the
other hand, since $A$ is hereditary of finite representation type,
we know there exists a total order on indecomposable objects in
$\fdmod(A)$, and hence in $\Dcal^b(A)$, such that $\Hom(X_j,X_i)=0$
whenever $j>i$. A contradiction! Therefore the heart $\Acal$ must be
a length category.
\end{proof}

Combined with Theorem \ref{bijection}, we obtain a bijection between
the set of bounded t-structures in $\Dcal^b(A)$ and equivalence
classes of families of simple-minded objects. It follows from Theorem \ref{piecewisehereditary} that any bounded t-structure can be induced from some recollement. In fact the next result shows that up to Auslander-Reiten translation, which is an autoequivalence of $\Dcal^b(A)$, we can choose the recollement to be of type $\mathcal{R}_r$ (i.e. associated with an idempotent $e_r$ of $A$, see the beginning of Section 5).

\begin{corollary}\label{hereditary}
Let $A$ be a hereditary algebra of finite representation type, and $T=(\Dcal^{\leq 0}, \Dcal^{\geq 0})$ any bounded t-structure in $\Dcal^b(A)$. Then there exists an autoequivalence $\Phi$ of $\Dcal^b(A)$ such that $\Phi(T):=(\Phi(\Dcal^{\leq 0}), \Phi(\Dcal^{\geq 0}))$ is compatible with
(and hence a BBD-induction with respect to) a recollement of type $\mathcal{R}_r$ for some $r$.
\end{corollary}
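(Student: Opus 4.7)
The plan is to combine Theorem~\ref{piecewisehereditary} with the action of the derived Auslander--Reiten translation as an autoequivalence of $\Dcal^b(A)$. Since $A$ is hereditary of finite representation type, Lemma~\ref{lengthheart} guarantees that the heart of $T$ is automatically a length category, so the proof of Theorem~\ref{piecewisehereditary} applies and yields that $T$ is compatible with a recollement of $\Dcal^b(A)$ generated, in the sense of Proposition~\ref{construct-recollement}, by an indecomposable exceptional object $X_1$; explicitly $\Img(j_!)=\tria(X_1)$ and $\Img(i_*)=X_1^{\perp}$.

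The second step is to exhibit $X_1$ as the image of some indecomposable projective $P_r$ under an autoequivalence of $\Dcal^b(A)$. Because $A$ is hereditary, the Nakayama functor $\nu$ is an autoequivalence of $\Dcal^b(A)$, hence so is the derived Auslander--Reiten translation $\tau=\nu[-1]$. Since $A$ is moreover of finite representation type, every indecomposable $A$-module is of the form $\tau^{k}P_s$ for some vertex $s$ and some $k\geq 0$, and every indecomposable in $\Dcal^b(A)$ is a shift of an indecomposable module. Consequently there exist a vertex $r$ and an autoequivalence $\Psi$ of $\Dcal^b(A)$ built from powers of $\tau$ and the shift functor such that $X_1\cong\Psi(P_r)$.

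Finally, set $\Phi:=\Psi^{-1}$. By Lemma~\ref{newreco} applied to $\Psi$, the recollement generated by $X_1$ agrees with the twist of $\mathcal{R}_r$ by $\Psi$: applying $\Psi$ to $\mathcal{R}_r$ transports its image $\tria(P_r)$ of $j_!$ and its image $P_r^{\perp}$ of $i_*$ to $\tria(\Psi(P_r))=\tria(X_1)$ and $X_1^{\perp}$ respectively, while keeping the right-hand category $\Dcal^b(e_rAe_r)=\Dcal^b(\End_{\Dcal^b(A)}(P_r))$ unchanged. Hence, by Lemma~\ref{BBDinduction}, the fact that $T$ is compatible with $\Psi(\mathcal{R}_r)$ implies that $\Phi(T)$ is compatible with $\mathcal{R}_r$ itself, which is the desired conclusion.

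The main obstacle will be the identification in the last step: verifying that the abstract recollement produced by Proposition~\ref{construct-recollement} from $P_r$ coincides, up to natural equivalence, with the standard idempotent recollement $\mathcal{R}_r$. This should reduce to the observation that both recollements are determined by the semiorthogonal decomposition $(\tria(P_r),P_r^{\perp})$ of $\Dcal^b(A)$ together with the endomorphism ring $\End_{\Dcal^b(A)}(P_r)=e_rAe_r$ appearing on the right-hand side.
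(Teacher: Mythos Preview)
Your proposal is correct and follows essentially the same approach as the paper: use Lemma~\ref{lengthheart} and Theorem~\ref{piecewisehereditary} to obtain a recollement generated by an exceptional $X_1$, transport via powers of the Auslander--Reiten translation $\tau$ so that $X_1$ becomes an indecomposable projective $P_r$, and identify the resulting recollement with $\mathcal{R}_r$. The only cosmetic differences are that the paper applies $\tau^s$ directly to the simple-minded objects and re-verifies compatibility (rather than invoking Lemmas~\ref{newreco} and~\ref{BBDinduction}), and it supplies an alternative total-order argument for the choice of $X_1$ specific to finite representation type; the identification of the recollement generated by $P_r$ with $\mathcal{R}_r$---the point you flag as the main obstacle---is asserted without further comment in the paper as well.
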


\begin{proof} Let $X_1,\ldots,X_n$ be the family of simple-minded objects associated to the bounded t-structure $T$. First of all we give an alternative way of finding such an $X_1$ as in the proof of Theorem \ref{piecewisehereditary}. For the algebra $A$ is hereditary of finite representation type, as mentioned in the proof of Lemma \ref{lengthheart}, there exists a total order on the set of indecomposable objects in
$\Dcal^b(A)$ such that $X<X[1]$ for any indecomposable $X$, and
$\Hom(X,Y)\neq 0$ implies $X<Y$ for any indecomposable $X$ and $Y$. Without loss of generality we assume that $X_1<X_i$ for all $i \geq 1$. We have to show that $X_i$, for all $i\geq 2$, belongs to the right
perpendicular category $X_1^\perp=\{Y\in\Dcal^b(A):\Hom(X,Y[k])=0,\
\forall\ k\in\Z\}$ of $X_1$. Since $X_1,\ldots,X_n$ is a family of
simple-minded objects, $\Hom(X_1,X_i[k])=0$ for all $i\geq 2$ and
all $k\leq 0$. Assume $\Hom(X_1,X_i[k]) \neq 0$ for some $i\geq 2$
and some $k>0$. Then we have a triangle $X_i[k-1] \ra Y \ra
X_1 \ra X_i[k]$ with $k-1\geq 0$. It follows that $X_i \leq X_i[k-1]
<Y < X_1$, a contradiction with the choice of $X_1$.

Again because $A$ is hereditary of finite representation type, the Auslander-Reiten translation $\tau$ is an autoequivalence of $\Dcal^b(A)$. Moreover, any indecomposable object in $\Dcal^b(A)$ can be transformed to an indecomposable projective $A$-module by iteratively applying $\tau$. So there exists a natural number $s\in\N$ and an idempotent $e_r\in A$ such that $\tau^s(X_1)=P_r$, the indecomposable projective $A$-module associated to $e_r$. It is clear that $\tau^s(T)=(\tau^s(\Dcal^{\leq 0}), \tau^s(\Dcal^{\geq 0}))$ is still a bounded t-structure in $\Dcal^b(A)$, $(\tau^s(X_1),\ldots,\tau^s(X_n))$ is the associated family of simple-minded objects, and $\tau^s(X_i)$, for $i\geq 2$, belong to the right perpendicular category $P_r^\perp$. By Theorem \ref{piecewisehereditary}, $\Phi(T)$ is a BBD-induction with respect to the recollement of $\Dcal^b(A)$ generated by $P_r$. This recollement is precisely of type $\mathcal{R}_r$.
\end{proof}

\end{section}

\end{document}